\documentclass{amsart}
\usepackage[utf8]{inputenc}
\usepackage[english]{babel}

\usepackage[dvips]{graphics,epsfig}
\usepackage{times}
\usepackage{amsmath,amsthm,amssymb}
\usepackage{array}
\usepackage{bbm}
\usepackage{xcolor}
\usepackage{verbatim}
\usepackage{upgreek}
\usepackage[foot]{amsaddr}

\pagestyle{plain}
\usepackage{geometry}

\newtheorem{theorem}{Theorem}[section]
\newtheorem{thm}[theorem]{Theorem}
\newtheorem{lemma}[theorem]{Lemma}
\newtheorem{question}[theorem]{Question}
\newtheorem{cor}[theorem]{Corollary}

\newtheorem{fact}[theorem]{Fact}
\newtheorem{claim}[theorem]{Claim}
\newtheorem{subclaim}[theorem]{Subclaim}

\theoremstyle{definition}
\newtheorem{definition}[theorem]{Definition}

\theoremstyle{remark}
\newtheorem{remark}[theorem]{Remark}

\def\Ind{\setbox0=\hbox{$x$}\kern\wd0\hbox to 0pt{\hss$\mid$\hss} \lower.9\ht0\hbox to 0pt{\hss$\smile$\hss}\kern\wd0}
\def\Notind{\setbox0=\hbox{$x$}\kern\wd0\hbox to 0pt{\mathchardef \nn=12854\hss$\nn$\kern1.4\wd0\hss}\hbox to 0pt{\hss$\mid$\hss}\lower.9\ht0 \hbox to 0pt{\hss$\smile$\hss}\kern\wd0}
\def\ind{\mathop{\mathpalette\Ind{}}}
\def\nind{\mathop{\mathpalette\Notind{}}}
\numberwithin{equation}{section}

\title{An $\mathrm{NSOP}_{1}$ theory without the existence axiom}
\author{Scott Mutchnik
}
\address{Department of Mathematics, Statistics, and Computer Science, University of Illinois at Chicago}
\email{mutchnik@uic.edu}
\thanks{This work was supported by the NSF under Grant No. DMS-2303034.}

\begin{document}

\begin{abstract}
    Answering a question of Dobrowolski, Kim and Ramsey, we find an $\mathrm{NSOP}_{1}$ theory that does not satisfy the existence axiom. 
\end{abstract}

\maketitle

\section{Introduction}

One of the core informal questions of model theory is to determine what role stability theory, famously introduced by Morley (\cite{M65}) and Shelah (\cite{Sh90}) to classify the number of non-isomorphic models (of a given size) of a first-order theory, can play in describing theories that are themselves unstable--and that may not even be simple. This project was initiated in large part in celebrated work of Kim (\cite{Kim98}) and Kim and Pillay (\cite{KP99}), who showed that the (forking-)independence relation has many of the same properties in simple theories that it does in stable theories, and in fact that these properties characterize simplicity. In a key step towards the non-simple case, Kaplan and Ramsey (\cite{KR17}) then defined \textit{Kim-independence} over models, which generalizes the definition of forking-independence:

\begin{definition}  \label{kimindependence}
Let $M \models T$. A formula $\varphi(x, b)$ \emph{Kim-divides} over $M$ if there is an $M$-invariant Morley sequence $\{b_{i}\}_{i \in \omega}$ starting with $b$ such that $\{\varphi(x, b_{i})\}_{i \in \omega}$ is inconsistent. A formula  $\varphi(x, b)$ \emph{Kim-forks} over $M$ if it implies a (finite) disjunction of formulas Kim-dividing over $M$. We write $a \ind^{K}_{M} b$, and say that $a$ is \emph{Kim-independent} from $b$ over $M$, if $\mathrm{tp}(a/Mb)$ does not include any formulas Kim-forking over $M$.
\end{definition}

In, for example, \cite{CR15}, \cite{KR17}, \cite{KR19}, \cite{KRS19}, it is shown that Kim-independence has many of the same properties in theories without the first strict order property--$\mathrm{NSOP}_{1}$ theories--that forking-independence has in simple theories. It is also shown that $\mathrm{NSOP}_{1}$ theories have a characterization in terms of properties of Kim-independence, similarly to how simplicity has a characterization in terms of properties of forking-independence. However, in contrast to the case of simple theories, this work only describes the properties of a relation $a \ind_{M}^{K} b$ defined when $M$ is a \textit{model}, leaving open the question of whether independence phenomena in $\mathrm{NSOP}_{1}$ theories extend from independence over models to independence over \textit{sets}.

The following axiom of first-order theories, though introduced earlier under different names (see, e.g. \cite{CK09}), was defined by Dobrowolski, Kim and Ramsey in \cite{DKR22}:

\begin{definition}
        A theory $T$ satisfies the \textit{existence axiom} if no type $p \in S(A)$ forks over $A$.
\end{definition}

This is equivalent to every type $p \in S(A)$ having a global extension that does not fork over $A$. All simple theories satisfy the existence axiom (\cite{Kim98}), while the circular ordering is an example of a dependent theory not satisfying the existence axiom (\cite{Kimthesis96}, Example 2.11). In \cite{DKR22} it is shown that, in $\mathrm{NSOP}_{1}$ theories satisfying the existence axiom, it is possible to extend the definition of Kim-independence $a \ind_{C}^{K} b$ from the case where $C$ is a model to the case where $C$ is an arbitrary set, so that the relation $a \ind_{C}^{K} b$ will have similar properties to Kim-independence over models in general $\mathrm{NSOP}_{1}$ theories. Specifically, because, under the existence axiom, (nonforking-)Morley sequences are defined over arbitrary sets, they can be used in place of invariant Morley sequences\footnote{It is easy to see that invariant Morley sequences are not defined over arbitrary sets in $\mathrm{NSOP}_{1}$ theories: for example, any set $A$ such that $\mathrm{acl}(A) \neq \mathrm{dcl}(A)$.} to define Kim-dividing of a formula over a set:

\begin{definition} \label{setkimindependence} (\cite{DKR22})
Let $T$ be a $\mathrm{NSOP}_{1}$ theory, and let $C \subset \mathbb{M}$ be an arbitrary set. A formula $\varphi(x, b)$ \emph{Kim-divides} over $C$ if there is a (nonforking-)Morley sequence $\{b_{i}\}_{i \in \omega}$ over $C$ starting with $b$ such that $\{\varphi(x, b_{i})\}_{i \in \omega}$ is inconsistent. A formula  $\varphi(x, b)$ \emph{Kim-forks} over $C$ if it implies a (finite) disjunction of formulas Kim-dividing over $C$. We write $a \ind^{K}_{A} b$, and say that $a$ is \emph{Kim-independent} from $b$ over $C$, if $\mathrm{tp}(a/Ab)$ does not include any formulas Kim-forking over $C$.
\end{definition}

Dobrowolski, Kim and Ramsey (\cite{DKR22}) show that, in an $\mathrm{NSOP}_{1}$ theory \textit{satisfying the existence axiom}, the Kim-independence relation $\ind^{K}$ as defined above over arbitrary sets satisfies symmetry and the independence theorem (for Lascar strong types), and that Kim-dividing over arbitrary sets satisfies Kim's lemma and coincides with Kim-forking. Chernikov, Kim and Ramsey, in \cite{CKR20}, show even more properties of Kim-independence over arbitrary sets in $\mathrm{NSOP}_{1}$ theories satisfying the existence axiom, including transitivity and witnessing. (All of these properties correspond to the properties of Kim-independence over models proven in general $\mathrm{NSOP}_{1}$ theories in \cite{KR17}, \cite{KR19}.) Motivated by these results, \cite{DKR22}, and later \cite{KKL24}, ask:

\begin{question}\label{mainquestion}
   Does every $\mathrm{NSOP}_{1}$ theory satisfy the existence axiom?
\end{question}

We show that the answer to this question is no:

\begin{thm}
\label{main}  There is an $\mathrm{NSOP}_{1}$ theory not satisfying the existence axiom.
\end{thm}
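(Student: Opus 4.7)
The plan is to construct $T$ as the generic (Fraïssé) limit of an explicit class $\mathcal{K}$ of finite relational structures, tuned so that (i) over some small set $A$ the algebraic closure properly contains the definable closure, and (ii) this proper containment is arranged to obstruct realizing certain $A$-consistent types without forking. The construction will interleave a non-definable covering structure (enabling $\acl(A) \supsetneq \dcl(A)$) with a generic combinatorial relation whose pattern forces any completion of a specified formula to commit to one element of a finite algebraic orbit that then divides along an indiscernible sequence.

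First, I would specify the relational language $L$ and the class $\mathcal{K}$ of finite $L$-structures, ensuring the hereditary property, joint embedding, and a suitable (strong) amalgamation property, so that the Fraïssé limit $M$ is $\aleph_{0}$-homogeneous and $T = \mathrm{Th}(M)$ admits quantifier elimination in a convenient expansion. The axioms of $\mathcal{K}$ will be chosen so that for a concrete finite $A \subseteq M$, there is a formula $\varphi(x, A)$ satisfiable in $M$ whose realizations lie in a finite $\mathrm{Aut}(M/A)$-orbit of size at least two, and so that this orbit is not split by any formula over $A$.

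Second, I would verify that $T$ is $\mathrm{NSOP}_{1}$ via the criterion of Kaplan and Ramsey (\cite{KR17}). The route is to introduce an invariant ternary relation $\ind^{*}$ over models of $T$---the natural ``free amalgamation over a common model'' coming from $\mathcal{K}$---and to check strong finite character, existence, monotonicity, symmetry, and the independence theorem for Lascar strong types over models. Only the independence theorem is substantive; it should reduce to strong amalgamation in $\mathcal{K}$, and together with the other properties it forces $\ind^{*} \subseteq \ind^{K}$ and yields $\mathrm{NSOP}_{1}$.

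Third, I would exhibit a type $p \in S(A)$ that forks over $A$. Taking $A$ with $\dcl(A) = A$ but $\acl(A) \supsetneq A$, the formula $\varphi(x, A)$ above implies a finite disjunction $\psi_{1}(x, c_{1}) \vee \cdots \vee \psi_{n}(x, c_{n})$, where $c_{1}, \ldots, c_{n}$ enumerate $\acl(A) \setminus A$. For each $i$ I would construct an $A$-indiscernible sequence $(c_{i}^{(j)})_{j < \omega}$ in $\tp(c_{i}/A)$, using the amalgamation of $\mathcal{K}$, such that $\{\psi_{i}(x, c_{i}^{(j)})\}_{j < \omega}$ is inconsistent; this witnesses that $\psi_{i}$ divides over $A$, so $\varphi$ forks over $A$. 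The principal obstacle is that generic amalgamation constructions tend to satisfy the existence axiom for free: free amalgamation with $A$ usually realizes any $A$-consistent type without forking. To prevent this, the axioms of $\mathcal{K}$ must selectively obstruct amalgamation over non-algebraically-closed sets while still permitting the free amalgamations needed for the independence theorem over models. Calibrating this trade-off---and then checking both $\mathrm{NSOP}_{1}$ and the failure of existence without one breaking the other---is where the main technical difficulty will lie.
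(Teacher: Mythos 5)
There is a genuine gap in Step 3, and it is the heart of the matter. You propose to exhibit forking by arranging $\acl(A) \supsetneq \dcl(A) = A$, choosing $\varphi(x,A)$ to imply $\psi_1(x,c_1) \vee \cdots \vee \psi_n(x,c_n)$ with $c_1,\ldots,c_n$ enumerating $\acl(A)\setminus A$, and then showing each $\psi_i(x,c_i)$ divides over $A$ via an $A$-indiscernible sequence inside $\tp(c_i/A)$. This cannot work: if $c_i \in \acl(A)$, then $\tp(c_i/A)$ has only finitely many realizations, so any $A$-indiscernible sequence of realizations is constant, and no formula $\psi(x,c)$ with $c \in \acl(A)$ can divide over $A$. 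In other words, the $\acl/\dcl$ gap you are building in is not a source of dividing; it is an obstruction to it.

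The paper's mechanism for breaking existence is quite different and worth internalizing, because it sidesteps this issue entirely. It works over $\emptyset$, where one has $\acl(\emptyset)=\emptyset$ (no nontrivial algebraicity is used). There are two sorts $P$ and $O$, with binary relations $\rho_1,\rho_2$ between them, and an exhaustiveness axiom (C3): for every $p\in P$ and $o\in O$, exactly one of $\rho_1(p,o)$, $\rho_2(p,o)$ holds. Hence the unique complete type $p(x)\in S(\emptyset)$ in sort $P$ entails $\rho_1(x,o)\vee\rho_2(x,o)$ for an arbitrary (hence non-algebraic) parameter $o\in O$. The tree-like combinatorial constraints (C4), bounding how many $\rho_i$-colored vertices can sit on the boundary of a common unit ball, then make each disjunct divide: taking an $\emptyset$-indiscernible sequence of vertices $o_i$ that share a common neighbor, $\{\rho_1(x,o_i)\}$ is $2$-inconsistent and $\{\rho_2(x,o_i)\}$ is $3$-inconsistent. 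So $p$ forks over $\emptyset$ without any appeal to $\acl\neq\dcl$; the parameters of the dividing formulas lie in an infinite orbit. Your broad framing in Steps 1 and 2 (an amalgamation-class construction, quantifier elimination relative to a closure, and verifying the Kaplan--Ramsey criterion for an explicit $\ind^*$) is in the right spirit and close to what the paper does, though the paper's $\ind$ is not bare free amalgamation but also requires that the closures of $MA$ and $MB$ meet only in $M$ and that paths between them pass through $M$. But the plan for failure of existence needs to be rethought along the lines above: look for a formula forced to hold by an axiom of exhaustiveness whose parameters are non-algebraic and whose disjuncts each divide for combinatorial reasons, rather than for a finite $\mathrm{Aut}(M/A)$-orbit.
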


This contrasts with the work of Kim, Kim and Lee in \cite{KKL24}, where, for the definition of Kim-forking over sets given by Dobrowolski, Kim and Ramsey in \cite{DKR22} (Defintion \ref{setkimindependence} above), it is shown that no type $p \in S(A)$ Kim-forks over $A$ (i.e. contains a formula Kim-forking over $A$), where $A$ is an arbitrary set in an $\mathrm{NSOP}_{1}$ theory. In fact, our example is the first known example of a theory without the strict order property, or $\mathrm{NSOP}$ theory, not satisfying the existence axiom. Prior to our results, the theory of an algebraically closed field with a generic multiplicative endomorphism, constructed by d'Elbée in \cite{D23}, was suggested there as a candidate for an $\mathrm{NSOP}_{1}$ theory without the existence axiom; whether the theory constructed by d'Elbée  satisfies the existence axiom was left unresolved in that article. However, as discussed in a personal communication with d'Elbée (\cite{Dbanff2023}), it is expected that that theory, which was shown in \cite{D23} to be $\mathrm{NSOP}_{1}$, actually does satisfy the existence axiom.

Our construction is based on the theory of $\omega$-stable free pseudoplanes, a classical example of a non-one-based $\omega$-stable theory with trivial forking discussed in, say, \cite{P96}. The theory of $\omega$-stable free pseudoplanes is the theory of undirected graphs of infinite minimum degree without cycles. Variations of this construction appear in e.g. \cite{HP21}, \cite{AT23}, \cite{T14}, \cite{KoponenConjecture}. Note also that some arguments from the below, including the strategy, in axiom schema $T_{4}$, of requiring that $n$ connected sets of pairwise distance greater than $2^{n}$ can be colored independently, and the associated Claims \ref{coloringextension} and \ref{coloringinterpolation}, their proofs, and their application in proving claim (***), are formally similar to those of Section 3 of Chernikov, Hrushovski, Kruckman, Krupiński, Moconja, Pillay and Ramsey (\cite{CHKKMPR23}).

\section{The construction}

Let $\mathcal{L}$ be the language with sorts $P$ and $O$, symbols $R_{1}$ and $R_{2}$ for binary relations on $O$, and symbols $\rho_{1}$ and $\rho_{2}$ for binary relations between $P$ and $O$. Call an $\mathcal{L}$-structure $A$ \textit{copacetic} if:

(C1) For $i = 1, 2$, $R_{i}(A)$ is a symmetric, irreflexive relation on $O(A)$, and the two are mutually exclusive: for $a_{1}, a_{2} \in O(A)$, $A \not\models R_{1}(a_{1}, a_{2}) 
 \wedge R_{2}(a_{1}, a_{2})$.

 (C2) The relation $R_{1}(A) \cup R_{2}(A)$ has no loops on $O(A)$ (i.e. there are no distinct $a_{0} \ldots a_{n-1} \in O(A)$, $n > 2$, and $i_{1} \ldots i_{n} \in \{1, 2\}$ so that, for $0 \leq j \leq n-1$, $A \models R_{i_{j}}(a_{i}, a_{i+1 \mathrm{\: mod \:} n})$).

 (C3) For all $b \in P(A)$, $a \in O(A)$, exactly one of $A \models \rho_{1}(b, a)$ and $A \models \rho_{2}(b, a)$ hold.

 (C4):
 
(a) For each $b \in P(A)$, there are no distinct $a_{1}, a_{2} \in O(A)$, so that there there is some $a_{*} \in O(A)$ so that $A \models R_{1}(a_{1}, a_{*}) \wedge R_{1}(a_{2}, a_{*})$, and $A \models \rho_{1}(b, a_{1}) \wedge \rho_{1}(b, a_{2}) $.

(b) For each $b \in P(A)$, there are no distinct $a_{1}, a_{2}, a_{3} \in O(A)$, so that there there is some $a_{*} \in O(A)$ so that $A \models R_{2}(a_{1}, a_{*}) \wedge R_{2}(a_{2}, a_{*}) \wedge R_{2}(a_{3}, a_{*})$, and $A \models \rho_{2}(b, a_{1}) \wedge \rho_{2}(b, a_{2}) \wedge \rho_{2}(b, a_{3})$.

Let $A$ be copacetic, and let $b \in P(A)$, $a \in O(A)$, $i \in {1, 2}$. Then there are at most $i$ many $a' \in O(A)$ with $A \models R_{i}(a, a')$ and $A \models \rho_{i}(b, a') $. For $N$ the number of such $a'$ that are defined, let $b^{A, j}_{\to^{i} }(a)$, $1 \leq j \leq N$, denote the $N$ many such $a'$ (so if $i = 2$ and there are two such $a'$, make an arbitrary choice of which is $b^{A,1 }_{\to^{i} }(a)$ and which is $b^{A,2 }_{\to^{i} }(a)$, while if $i = 1$, then $b^{A,1 }_{\to^{i} }(a)$ is the sole such $a'$, if it exists.) 

If $B$ is copacetic and $A \subset B$ is a substructure of $B$ (and is therefore also copacetic), call $A$ \textit{closed} in $B$ (denoted $A \leq B$) if 

(i) For  $b \in P(A)$, $a \in O(A)$, $i \in \{1, 2\}$, $1 \leq j \leq i$, if $b^{B, j}_{\to^{i} }(a)$ exists, then $b^{B, j}_{\to^{i} }(a) \in A$. 

(ii) Any $R_{1}(B) \cup R_{2}(B)$-path between nodes of $O(A)$ lies in $O(A)$: for all $a_{1}, a_{n} \in O(A)$, $a_{2}, \ldots a_{n-1} \in O(B)$ which are distinct and distinct from $a_{1}$, $a_{n}$, and $i_{1} , \ldots, i_{n-1} \in \{1, 2\}$, if $A \models R_{i_{j}}(a_{i}, a_{i+1})$ for $1 \leq j \leq n-1$,  then for all $2 \leq i \leq n-1$, $a_{i} \in O(A)$.

Call a copacetic $\mathcal{L}$-structure $A$ \textit{connected} if $R_{1}(A) \cup R_{2}(A)$ forms a connected graph on $O(A)$: for all $a, a' \in O(A)$, there are  $a_{2}, \ldots a_{n-1} \in O(A)$, $i_{1} , \ldots, i_{n-1} \in \{1, 2\}$ for some $n$, so that for $a_{1} = a$, $a_{n} = a'$,  $A \models R_{i_{j}}(a_{i}, a_{i+1})$ for all $1 \leq j \leq n-1$. So if $B$ is copacetic and $A \subseteq B$, connectedness of $A$ supplants requirement (ii) of $A$ being closed in $B$. Call a subset of $A$ a \textit{connected component} of $A$ if it is a maximal connected subset of $O(A)$.

Let $O$ be an undirected graph without cycles and with a $2$-coloring of its edges, with $R_{1}$, $R_{2}$ denoting edges of either color. Let $\rho_{1}, \rho_{2} \subset O$, $O = \rho_{1} \cup \rho_{2}$, $\rho_{1} \cap \rho_{2} = \emptyset$  be a coloring of the vertices of $O$ so that, for $i = \{1, 2\}$, no $i+1$ distinct vertices of $O$, lying on the boundary of the same $R_{i}$-ball of radius $1$ (i.e. they have a common $R_{i}$-neighbor), are both colored by $\rho_{i}$. Then we call $\rho_{1}, \rho_{2}$ a \textit{(C4)-coloring} of $O$. For $A$ copacetic, $O' \subseteq O(A)$, and $\rho_{1}, \rho_{2} \subseteq O'$ a (C4)-coloring of $O'$, say that $b \in P(A)$ \textit{induces} the (C4)-coloring $\rho_{1}, \rho_{2}$ on $O'$ if for $ i \in \{1, 2\}$, $\rho_{i} = \{a \in O': A \models \rho_{i}(b, a)\}$.

The following assumptions on an $\mathcal{L}$-structure $A$ are expressible by a set of first-order sentences:

($T_{1}$) (Copaceticity) $A$ is copacetic.

($T_{2}$) (Completeness) For $b \in P(A)$, $a \in O(A)$, $i \in \{1, 2\}$, if $A \models \neg \rho_{i}(b, a) $, then $b^{A, j}_{\to^{i} }(a)$ exists for $1 \leq j \leq i$. 

($T_{3}$) (Tree extension) For $C \subseteq A$, and $B \geq C$ finite and copacetic with $P(B) = P(C)$, there is an embedding $\iota: B \hookrightarrow  A$ with $\iota|_{C} = \mathrm{id}_{C}$.

($T_{4}$) (Parameter introduction) For any $n < \omega$, finite connected sets $O_{1}, \ldots, O_{n} \subseteq O(A)$ so that there does not exist an $R_{1} \vee R_{2}$-path of length at most $2^{n}$ between a vertex of $O_{i}$ and a vertex of $O_{j}$ for any distinct $i, j \leq n$ (so in particular, $O_{i}$ and $O_{j}$ are disjoint), and (C4)-colorings $\rho^{i}_{1}, \rho^{i}_{2} \subseteq O_{i}$ of $O_{i}$ for $i \leq n$, there are infinitely many $b \in P(A)$ so that, for each $i 
\leq n$, $b$ induces the (C4)-coloring  $\rho^{i}_{1}, \rho^{i}_{2} \subseteq O_{i}$ on $O_{i}$.

Let $T^{\not\exists}=T_{1} \cup T_{2} \cup T_{3} \cup T_{4}$. We claim that $T^{\not\exists}$ is consistent. This will follow by induction from the following three claims:

(*) If $A$ is copacetic, $b \in P(A)$, $a \in O(A)$, $i \in \{1, 2\}$, and $1 \leq j \leq i$, there is a copacetic $\mathcal{L}$-structure $A' \supseteq A$ (i.e. containing $A$ as a substructure) so that $b^{A', j}_{\to^{i} }(a)$ exists.

(**) If $A$ is copacetic, $C \subseteq A$, and $B \geq C$ is finite and copacetic with $P(B) = P(C)$, there is a copacetic structure $A' \supseteq A$ and an embedding $\iota: B \hookrightarrow  A'$ with $\iota|_{C} = \mathrm{id}_{C}$. 

(***) If $A$ is copacetic, for any $n < \omega$, finite connected sets $O_{1}, \ldots, O_{n} \subseteq O(A)$ so that there does not exist an $R_{1} \vee R_{2}$-path of length at most $2^{n}$ between a vertex of $O_{i}$ and a vertex of $O_{j}$ for any distinct $i, j \leq n$, and (C4)-colorings $\rho^{i}_{1}, \rho^{i}_{2} \subseteq O_{i}$ of $O_{i}$ for $i \leq n$, there is some copacetic $A' \supset A$ and $p \in P(A'\backslash A)$ so that, for each $i 
\leq n$, $p$ induces the (C4)-coloring  $\rho^{i}_{1}, \rho^{i}_{2} $ on $O_{i}$.

We first show (*). Suppose $b^{A, j}_{\to^{i} }(a)$ does not already exist. Let $A' = A \cup\{*\}$ for $*$ a new point of sort $O$, and let $R_{i}(A') = R(A) \cup \{(a, *), (*, a)\}$, $R_{3-i}(A')=R_{3-i}(A)$, $\rho_{3-i}(A') = \rho_{3-i}(A) \cup (P(A) \backslash \{b\}) \times \{*\}$, $\rho_{i}(A') = \rho_{i}(A) \cup \{(b, *)\}$. Then $A'$ is copacetic; first, (C1)-(C3) are clearly satisfied. Second, no point of $P(A) \backslash \{b\}$ can witness a failure of (C4), because $A$ is copacetic and $*$ is not an $R_{3-i}$-neighbor of any point of $O(A')$. Finally, neither can $b$ witness a failure of (C4), because $*$ is not an $R_{i-3}$ neighbor of any point of $O(A')$ and $a$ is the unique $R_{i}$-neighbor of $*$ in $O(A')$, that failure must by copaceticity of $A$ be witnessed on the boundary of the $R_{i}$-ball of radius $1$ centered at $a$. But because $b^{A, j}_{\to^{i} }(a)$ does not already exist, there are fewer than $i$ many $R_{i}$-neighbors $a'$ of $a$ in $O(A)$ with $A \models \rho_{i}(b, a')$, so there are at most $i$ many $R_{i}$-neighbors $a'$ of $A$ in $O(A')$ with $A \models \rho_{i}(b, a')$; thus the failure of (C4) is not in fact witnessed on this ball's boundary. By construction, we can then choose $b^{A', j}_{\to^{i} }(a)= *$.

We next show (**). For this we need the following claim:

\begin{claim}
\label{coloringextension}    Let $O'$ be undirected graph without cycles and with a $2$-coloring of its edges, with $R_{1}$, $R_{2}$ denoting edges of either color. Let $O \subset O'$ be a subgraph with the induced coloring, and with $O \leq O'$ (in the sense of (ii), so any path between two vertices of $O'$ consisting of edges of any colors is contained in $O$). Then any (C4)-coloring $\rho_{1}, \rho_{2}$ of $O$ extends to a (C4)-coloring $\rho_{1}', \rho'_{2}$ of $O'$, which has the following additional property: if $v \in O' \backslash O$ has an $R_{i}$-neighbor in $O$, then $v \in \rho'_{3-i}$.
\end{claim}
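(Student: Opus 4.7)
\textbf{Proof plan for Claim \ref{coloringextension}.}

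The plan is to exploit the forest structure of $O'$ together with the closedness hypothesis to reduce the extension problem to coloring each connected component of $O' \setminus O$ by a simple ``parent-edge'' rule. The first step, and the key structural observation, is that each connected component $C$ of the induced subgraph on $O' \setminus O$ has \emph{at most one} edge to $O$. Indeed, if $C$ had two distinct edges $(c_1, v_1)$ and $(c_2, v_2)$ with $c_j \in C$ and $v_j \in O$, then concatenating a $C$-internal path from $c_1$ to $c_2$ with these two edges would produce either a $v_1$-to-$v_2$ path through $O' \setminus O$, contradicting the closedness condition (ii) of $O$ in $O'$, or (if $v_1=v_2$) a cycle, contradicting that $O'$ has no cycles.

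Using this, for each component $C$ of $O' \setminus O$ with an edge to $O$, let $r_C \in C$ be the unique vertex with a unique $O$-neighbor $w_C$, and view $C$ as a rooted tree in which $r_C$'s ``parent'' is $w_C$ and every other $v \in C$ has its parent inside $C$. For components with no edge to $O$, root the tree at an arbitrary vertex, leaving the root with no parent. Define the extended coloring $\rho'_1, \rho'_2$ by setting $\rho'_i = \rho_i$ on $O$ and, for $v \in O' \setminus O$, declaring $v \in \rho'_{3-i}$ whenever the edge from $v$ to its parent has color $R_i$; for the parentless root of a free component (which has no edge to $O$ at all), assign any color, say $\rho'_2$. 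The extra property from the claim is immediate: if $v \in O' \setminus O$ has an $R_i$-neighbor in $O$, then $v$ is necessarily the root $r_C$ of its component with parent edge of color $R_i$, hence $v \in \rho'_{3-i}$.

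The remaining work is to verify (C4)(a) and (C4)(b). I would carry this out by the following uniform case analysis. Suppose for contradiction that $v_1, v_2, v_3 \in \rho'_2$ are distinct with a common $R_2$-neighbor $v_*$ (the (C4)(a) case is parallel with two vertices and colors swapped). If $v_*$ and all the $v_j$ are in $O$, then by the closedness condition (ii) any intermediate vertex of the path $v_j - v_* - v_k$ already lies in $O$, so this configuration sits inside $O$ and is excluded by the (C4)-coloring hypothesis on $\rho_1, \rho_2$. Otherwise I would argue using the parent relation: any vertex $v \in O' \setminus O$ whose parent edge is $R_2$ lies in $\rho'_1$ by construction, so if $v_*$ is the parent of some $v_j \in O' \setminus O$ via an $R_2$-edge, then $v_j \in \rho'_1$, contradicting $v_j \in \rho'_2$. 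This rules out the case where $v_j \in O' \setminus O$ has $v_*$ as parent; hence for every $v_j \in O' \setminus O$ one must instead have $v_j$ as the parent of $v_*$. Since in a tree a vertex has at most one parent, at most one of the $v_j$ can play this role, and the remaining two distinct $v_j$'s force a contradiction.

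The main obstacle is keeping the case analysis honest across the boundary between $O$ and $O' \setminus O$: specifically, ensuring that when the common neighbor $v_*$ or some $v_j$ crosses this boundary, the ``parent'' designation we defined above is still the correct one to invoke. This is exactly where the first structural observation pays off: because each component of $O' \setminus O$ has a unique attachment edge to $O$, any $R_i$-edge between $O' \setminus O$ and $O$ is forced to be a root edge in our tree decomposition, so the parent-child bookkeeping is unambiguous and the contradiction in each case comes down to the same ``at most one parent'' counting argument.
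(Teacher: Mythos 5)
Your proposal is correct and takes essentially the same approach as the paper: decompose $O' \setminus O$ into connected components, observe via closedness of $O$ that each component has at most one attachment vertex with a unique neighbor in $O$, root each component at that attachment vertex (or arbitrarily for free components), and color each non-root vertex by $\rho'_{3-i}$ where $R_i$ is the color of its parent edge. The paper phrases the (C4)-verification slightly differently---it argues directly that among any two vertices at $R_i$-distance $2$ in $O'$, not both outside $O$ can receive $\rho'_i$, splitting on whether the length-$2$ path passes through $O$ or stays inside a component---whereas you run an explicit case analysis on the position of the common center $v_*$ and the boundary vertices and reduce to the ``each tree vertex has at most one parent'' count; both reductions are equivalent and the overall argument is the same.
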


\begin{proof}
By the assumption on paths in $O'$ between two vertices of $O$, we may decompose $O' \backslash O$ into a disjoint union $\sqcup O^{i}$ of connected subgraphs, so that each $O^{i}$ has at most one vertex $v_{i}$ with any neighbors in $O$; $v_{i}$ will in fact have only one neighbor $w_{i}$ in $O$. For every $O^{i}$ all of whose vertices have no neighbors in $O$, let $v_{i}$ be an arbitrary vertex of $O^{i}$. Then inductively, we can order each $O^{i}$ as a tree (i.e., a partial order with linearly ordered downsets) so that $v_{i}$ is the root, any node's immediate successors are all neighbors of that node and, among any two neighbors, one must be an immediate successor of the other, and each maximal linearly ordered set is well-ordered of order type at most $\omega$. Extend $\rho_{1}, \rho_{2}$ on each $O_{i}$ as follows, starting from $v_{i}$ and proceeding by induction. If $w_{i}$ is an $R_{i}$-neighbor of $v_{i}$, color $v_{i}$ by $\rho'_{3-i}$; otherwise, color $v_{i}$ arbitrarily. Then for each vertex $v$ of $O^{i}$, each immediate successor of $v$ will be an $R_{i}$-neighbor of $v$ for some $i \in \{1, 2\}$; color it by $\rho_{3 - i}$. No two distinct vertices $v, w$ with an $R_{i}$-distance of exactly $2$ in $O'$ can then be colored in $\rho^{i}$: if the $R_{i}$-path between $v$ and $w$ goes through $O$, then whichever of $v$ and $w$ is not in $O$ will be colored by $\rho'_{3-i}$, and if the $R_{i}$-path between $v$ and $w$ stays in $O^{i}$, whichever of $v$ and $w$ is not the least in the path will be colored by $\rho'_{3-i}$. This shows that $\rho'_{1}, \rho'_{2}$ is a (C4)-coloring, and the additional property is immediate from the construction.

\end{proof}

Now, take the set $A'$ to be the disjoint union of $A$ and $B$ over $C$; for notational simplicity we identify $A$, $B$ and $C$ respectively with their images in $A'$. Let the $R_{1}, R_{2}$-structure on $O(A')$ be given as follows: the identification on $O(A)$ and $O(B)$ preserves the $R_{i}$-structure, and $O(A)$ and $O(B)$ are freely amalgamated over $O(C)$ (i.e. there are no $R_{i}$-edges between $O(A\backslash C)$ and $O(B\backslash C)$). This guarantees (C1), and also guarantees (C2) by condition (ii) of $C \leq B$. Now let the $\rho_{1}, \rho_{2}$-structure on $A'$ be given as follows: the identifications on $A$ and $B$ preserve the $\rho_{i}$-structure, which, by the assumption that $P(B) = P(C)$, leaves us only by way of satisfying (C3) to define the $\rho_{1}, \rho_{2}$-structure on $P(A\backslash C) \times O(B \backslash C)$, and this will be the following. Let $p \in P(A\backslash C)$; the requirement that the $\rho_{i}$-structure on $A$ is preserved tells us the (C4)-coloring that $p$ induces on $O(C)$, and we extend this to a (C4)-coloring on $O(B)$ as in Claim \ref{coloringextension}; here we use condition (ii) of $C\leq B$. We have defined the full $\mathcal{L}$-structure on $A'$, so it remains to show (C4); in other words, we show that $p$ induces a (C4)-coloring on $A'$ in the case where $p \in P(C)$ and in the case where $p \in P(A\backslash C)$. In either case $p$ induces a (C4)-coloring on $A$ and on $B$, so the only way (C4) can fail is on the boundary of an $R_{i}$-ball of radius $1$ centered at $c \in C$. But (C4) cannot fail on the boundary of this ball, because it does not fail there in $A$, and by condition (i) of $C \leq B$ in the first case, or by the additional clause of Claim \ref{coloringextension} in the second case, any $R_{i}$-neighbor of $c$ in $B\backslash C$ is colored by $\rho_{3-i}$ in the (C4)-coloring induced by $p$ on $B$, so (C4) still cannot fail on the boundary of this ball in $A'$. So $A'$ is a copacetic $\mathcal{L}$-structure containing $A$, and clearly, the embedding $\iota$ as in (**) exists, so this shows (**).

Finally, we show (***). For this, we need an additional combinatorial claim:

\begin{claim}
   \label{coloringinterpolation} Let $O$ be be undirected graph without cycles and with a $2$-coloring of its edges, with $R_{1}$, $R_{2}$ denoting edges of either color, and let $O_{1}, \ldots, O_{n} \subseteq O$ be connected subsets so that there does not exist an $R_{1} \vee R_{2}$-path of length at most $2^{n}$ between a vertex of $O_{i}$ and a vertex of $O_{j}$ for any distinct $i, j \leq n$. Let $\rho^{i}_{1}, \rho^{i}_{2} \subseteq O_{i}$ be a (C4)-coloring of $O_{i}$ for $i \leq n$. Then there is some $O' \leq O$ containing $O_{1} \cup \ldots \cup O_{n}$ and (C4)-coloring $\rho_{1}, 
\rho_{2}$ of $O$ so that, for $i \leq n$, $\rho_{1}, \rho_{2}$ restricts to $\rho^{i}_{1}, \rho^{i}_{2}$ on $O_{i}$.
\end{claim}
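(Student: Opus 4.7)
My plan is to take $O'$ to be the convex hull of $O_{1} \cup \cdots \cup O_{n}$ in the forest $O$, i.e., $\bigcup_{i} O_{i}$ together with the unique paths in $O$ connecting any two vertices in distinct $O_{i}$'s that lie in the same connected component of $O$. Then $O' \leq O$ by construction, so it suffices to construct a (C4)-coloring of $O'$ extending the given $\rho^{i}_{1}, \rho^{i}_{2}$; applying Claim \ref{coloringextension} with $O'$ in the role of the closed subgraph then extends this coloring to all of $O$. The set $O' \setminus \bigcup_{i} O_{i}$ decomposes into finitely many connected subtrees, and the distance hypothesis $d(O_{i}, O_{j}) > 2^{n}$ forces each such component to be attached to each adjacent $O_{i}$ via a unique ``boundary vertex''; in particular, every vertex in $O' \setminus \bigcup_{i} O_{i}$ has at most one neighbor in $\bigcup_{i} O_{i}$, since otherwise two of the $O_{i}$'s would be joined by a path of length $\leq 2$.

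My coloring rule for $O' \setminus \bigcup_{i} O_{i}$ is the breadth-first-search safe-color rule from the proof of Claim \ref{coloringextension}: each vertex $u \notin \bigcup_{i} O_{i}$ is colored opposite to the $R$-index of the edge from $u$ to its BFS parent toward $\bigcup_{i} O_{i}$. This makes every BFS child of a vertex $u$ fail to count as a $\rho_{m}$-$R_{m}$-neighbor of $u$ for any $m$, so (C4) follows immediately at every vertex of $\bigcup_{i} O_{i}$ (combining with the given (C4)-property of $\rho^{i}_{1}, \rho^{i}_{2}$) and at every vertex of $O' \setminus \bigcup_{i} O_{i}$ with a uniquely defined BFS parent.

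The main obstacle is (C4) at \emph{ambiguity vertices} of $O' \setminus \bigcup_{i} O_{i}$, namely those equidistant from two or more boundary regions: there the BFS parent is not uniquely defined, and for certain edge-color patterns the safe-color rule applied to arbitrary choices of parent can force several $R_{m}$-neighbors of the ambiguity vertex to share color $\rho_{m}$, violating (C4) (for example, a vertex equidistant from two boundary vertices along a path whose two outer edges are $R_{2}$ and whose two inner edges are $R_{1}$ ends up with two $R_{1}$-neighbors both colored $\rho_{1}$). The exponential bound $2^{n}$ is designed to push every ambiguity vertex to distance greater than $2^{n-1}$ from $\bigcup_{i} O_{i}$, leaving a long interior region on which to repair these conflicts by a local flipping of colors at some interior vertex where the (C4)-budget permits it. I expect to formalize this repair by an induction on $n$ in which I merge the pair of $O_{i}$'s of smallest separation into a single already-colored subset, using the halved distance $2^{n-1}$ as inductive data; verifying that this repair preserves (C4) simultaneously at each ambiguity vertex and at every boundary of every $O_{i}$ will be the most delicate step.
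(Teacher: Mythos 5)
Your high-level plan---induct on $n$, merge the closest pair $O_{i}, O_{j}$ into a single already-colored connected set, and halve the distance bound from $2^{n}$ to $2^{n-1}$---is exactly the inductive skeleton the paper uses, and your verification that the merged set stays at distance $> 2^{n-1}$ from the others would go through. But the proposal never actually establishes the base case, which is where the combinatorial content of the claim lives, and the ingredients you do supply for it are flawed.

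First, the assertion that the BFS safe-color rule gives (C4) ``at every vertex of $O' \setminus \bigcup_{i} O_{i}$ with a uniquely defined BFS parent'' is false. The failure is not confined to ambiguity vertices with two candidate parents. Consider a connecting path $o_{0}, \ldots, o_{d}$ of odd length $d \geq 5$ between $O_{1}$ and $O_{2}$, with edge pattern $R_{2}, R_{1}, R_{1}, R_{2}$ in positions $(o_{(d-3)/2}, o_{(d-1)/2}), \ldots, (o_{(d+1)/2}, o_{(d+3)/2})$. Here $o_{(d-1)/2}$ and $o_{(d+1)/2}$ are an adjacent pair equidistant from $O_{1} \cup O_{2}$: each has a \emph{unique} BFS parent ($o_{(d-3)/2}$ and $o_{(d+3)/2}$ respectively), but the edge between them is a BFS cross edge, not a parent--child edge. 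Your rule then colors both $o_{(d-3)/2+1}$ and $o_{(d+1)/2}$ by $\rho_{1}$, and both are $R_{1}$-neighbors of $o_{(d-1)/2}$: a (C4) violation at a vertex with a unique BFS parent. Your argument that children are safe is correct but does not cover cross edges. Second, the ``local flipping'' repair is only gestured at; you explicitly flag that verifying it ``will be the most delicate step,'' which means the proof is not complete. Third, the role you assign to the exponential bound---pushing ambiguity vertices far from $\bigcup_{i} O_{i}$---is not what it is for; the bound is consumed by the merging induction, so that after $n - 2$ merges the remaining pair still has distance $> 2^{2} = 4$, i.e., a connecting path of length at least $5$, which is exactly what the base case needs.

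The missing idea is that the two colors do not have symmetric (C4) budgets, and a path coloring should exploit this rather than treat $R_{1}$ and $R_{2}$ symmetrically as BFS does. For the two-set base case (the paper's Subclaim \ref{pathcoloring}): let the connecting path be $o_{0}, \ldots, o_{m}$ with $m \geq 5$. Color $o_{1}$ by $\rho_{3-i}$ where $(o_{0}, o_{1})$ is $R_{i}$, and $o_{m-1}$ by $\rho_{3-j}$ where $(o_{m-1}, o_{m})$ is $R_{j}$ (this protects the balls centered in $O_{1}$ and $O_{2}$), and then color \emph{all} of $o_{2}, \ldots, o_{m-2}$ by $\rho_{2}$. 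An $R_{2}$-violation at an interior path vertex would need three $\rho_{2}$-colored $R_{2}$-neighbors, but interior path vertices have only two neighbors in $O_{1} \cup O_{2} \cup I$, so that is impossible. An $R_{1}$-violation at $o_{k}$ ($1 \leq k \leq m-1$) would need both neighbors colored $\rho_{1}$, but since $m \geq 5$ at least one of $o_{k-1}, o_{k+1}$ lies in $\{o_{2}, \ldots, o_{m-2}\}$ and is therefore colored $\rho_{2}$. No flipping or case analysis on edge patterns is needed. That the ``slack'' in the $R_{2}$-budget is essential, not incidental, is confirmed by Section 3 of the paper: in the symmetric variant $(\mathrm{C4})^{2,2,2}$ where every color has budget $1$, the analogue of this base case (Subclaim \ref{alternatepathcoloring}) requires three colors and a genuinely different alternating scheme, and the two-color construction you are attempting would fail.

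Finally, a small redundancy: you propose to color the whole convex hull at once and then also to run the merging induction. The two do not combine cleanly---the convex hull has branch vertices whose (C4)-balls receive contributions from several paths simultaneously, which your BFS rule and your repair step do not address---whereas the merging induction handles exactly one path at a time and sidesteps branch vertices entirely. You should drop the convex-hull-at-once plan and carry out only the induction, with the asymmetric $\rho_{2}$-interior coloring as the base case, followed by Claim \ref{coloringextension} to extend to all of $O$.
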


\begin{proof}
    Considering each connected component of $O$ individually, we may assume $O$ to be connected. We prove this claim by induction on $n$. Because we will later consider a variant of this construction where the case $n=2$ is the main difference, we isolate this case, which is necessary for the induction, as a subclaim:

    \begin{subclaim}
        \label{pathcoloring} Claim \ref{coloringinterpolation} is true where $O$ is connected and $n=2$.
    \end{subclaim}
\begin{proof}
    Let $I$ be the shortest path between $O_{1}$ to $O_{2}$, which will consist, ordered in the direction from $O_{1}$ to $O_{2}$, of  $o_{0}, \ldots o_{n}$, for $o_{0} \in O_{1}$, $o_{n} \in O_{2}$, $o_{1} \ldots o_{n-1} \in O \backslash (O_{1} \cup O_{2}) $, and $n \geq 5$. Because $O_{1}$ and $O_{2}$ are connected, it suffices to color $O_{1} \cup O_{2} \cup I$ so as to extend the colorings $\rho^{i}_{1}, \rho^{i}_{2}$ on $O_{i}$ for $i = 1, 2$, and to preserve the condition of being a (C4)-coloring. So color $O_{1}$ by $\rho^{1}_{1}, \rho^{1}_{2}$, $O_{2}$ by $\rho^{2}_{1}, \rho^{2}_{2}$, and color $o_{1} \ldots o_{n-1}$ as follows: first, color $o_{1}$ by $\rho_{3-i}$, for $i$ so that $o_{0}$ is an $R_{i}$-neighbor of $o_{1}$, and color $o_{n-1}$ by $\rho_{3-i}$, so that $o_{n-1}$ is an $R_{i}$-neighbor of $o_{n}$.  Since we colored $O_{1}$ by $\rho^{1}_{1}, \rho^{1}_{2}$ and $O_{2}$ by $\rho^{2}_{1}, \rho^{2}_{2}$, the only vertices $o$ of the $O_{i}$ so that the condition of being a (C4)-coloring can fail in $O_{1} \cup O_{2} \cup I$ on the boundary of the $R_{i}$-ball of radius $1$ centered at $o$ are $o_{0}$ and $o_{n}$, and we have just prevented this. It remains to color $o_{2} , \ldots o_{n-2}$; we color them all by $\rho_{2}$. The only remaining vertices $o$ of $O_{1} \cup O_{2} \cup I$ where the condition of being a (C4)-coloring can fail on the $1$-ball centered at $O$ are now $o_{1} \ldots o_{n-1}$, but the boundaries of the $1$-balls centered at those vertices in $O_{1} \cup O_{2} \cup I$ have just two points, at least one of which must be colored by $\rho_{2}$, because $n \geq 5$ and the interval $\{o_{2} , \ldots o_{n-2}\}$ whose points we colored by $\rho_{2}$ contains at least one of the two neighbors of each $o_{1}, \ldots, o_{n-1}$. So the condition of being a (C4)-coloring cannot fail there. 
\end{proof}
 We now consider the general case of the induction.  Without loss of generality, we may assume that $O_{n-1}$ and $O_{n}$ have minimum distance $d > 2^{n}$ among any $O_{i}, O_{j}$ with $i, j$ distinct. By Lemma \ref{pathcoloring}, we may find a (C4)-coloring $(\rho')^{n-1}_{1}, (\rho')^{n-1}_{2}$ of $O'_{n-1}=: O_{n-1} \cup O_{n} \cup I $ extending $\rho^{n-1}_{1}, \rho^{n-1}_{2}$ and $\rho^{n}_{1}, \rho^{n}_{2}$, where $I$ is the shortest path, which will be of length $d$, between $O_{n-1}$ and $O_{n}$. Clearly $O'_{n-1}$ is connected, so it suffices to show that no two of $O_{1}, \ldots O_{n-2}, O'_{n-1}$ have distance less than $\frac{d}{2} > 2^{n-1}$, because then we can apply the inductive step. Let $ i \leq n-2$; it suffices to show that $O_{i}$ does not have distance less than $\frac{d}{2}$ from  $O'_{n-1}$. Suppose otherwise; then it has distance less than $\frac{d}{2}$ from some point $p$ of $O'_{n-1}$, but by definition of $O'_{n-1}$, $p$ must have distance at most $\frac{d}{2}$ from either $O_{n-1}$ or $O_{n}$. So $O_{i}$ must have distance less than $\frac{d}{2} + \frac{d}{2} = d$ from either $O_{n-1}$ or $O_{n}$, contradicting minimality of $d$.
\end{proof}

By Claims \ref{coloringinterpolation} and \ref{coloringextension},  there is a (C4)-coloring $\rho_{1}, \rho_{2}$ on $O(A)$ extending the (C4)-coloring $\rho^{i}_{1}, \rho^{i}_{2}$ on $O_{i}$ for each $i \leq n$. So we can extend $A$ to $A'=A \cup p$, where $p \in P(A'\backslash A)$, and define the relations $\rho_{1}$ and $\rho_{2}$ on $O(A) \times \{p\}$ so that $p$ induces the (C4)-coloring $\rho_{1}, \rho_{2}$ on $O(A)$. This proves (***), and the consistency of $T^{\not\exists}$.

We call a copacetic $\mathcal{L}$-structure \textit{complete} if it satisfies axiom $T_{2}$; completeness of $A$ will always supplant condition (i) of $A \leq B$.

We now prove that $T^{\not\exists}$ has the following embedding property:

\begin{lemma}
  \label{embedding}  Let $\mathbb{M}$ be a sufficiently saturated model of $T^{\not\exists}$. Let $C \leq \mathbb{M}$, and let $B \geq C$ be a small copacetic $\mathcal{L}$-structure. Assume additionally that $B$ is complete. Then there is an embedding $\iota: B \hookrightarrow \mathbb{M}$ that is the identity on $C$ and satisfies $\iota(B) \leq \mathbb{M}$.
\end{lemma}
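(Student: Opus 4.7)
The plan is to realize, via the saturation of $\mathbb{M}$, a partial type $\Sigma$ over $C$ in variables $(x_b)_{b \in B \setminus C}$ (of matching sort) expressing both that $b \mapsto x_b$ (extended by $\mathrm{id}_C$) is an $\mathcal{L}$-embedding of $B$ and that the image is closed in $\mathbb{M}$. Concretely, $\Sigma$ contains: (i) the quantifier-free $\mathcal{L}(C)$-type of $B$; (ii) for each $a \in O(B)$, $p \in P(B)$, $i \in \{1,2\}$, the $\Pi_1$-formula
\[
\forall z\,\bigl(R_i(z, x_a) \wedge \rho_i(x_p, z) \rightarrow \textstyle\bigvee_{j=1}^{N} z = x_{b^{B,j}_{\to^i}(a)}\bigr),
\]
where $N = N_{a,p,i}$ is the count of $R_i$-neighbors of $a$ in $B$ that are $\rho_i$-related to $p$ (and $x_b = b$ for $b \in C$), encoding condition (i) of $\leq$; and (iii) for each $k \geq 1$ and $a, a' \in O(B)$, a $\Pi_1$-formula saying every $R_1\cup R_2$-path of length $k$ in $\mathbb{M}$ between $x_a$ and $x_{a'}$ stays inside $\{x_b\}\cup C$, encoding condition (ii).

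By saturation of $\mathbb{M}$, it suffices to show $\Sigma$ is finitely satisfiable. Given finite $\Sigma_0 \subseteq \Sigma$, extract the finite data: elements $b_1,\ldots,b_n$ and parameters $C_0 \subseteq C$. Build a finite copacetic $B^\ast \subseteq B$ containing $C_0 \cup \{b_1,\ldots,b_n\}$ along with the closure witnesses $b^{B,j}_{\to^i}(a)$ for $a \in O(B^\ast), p \in P(B^\ast), i \in \{1,2\}, j \leq i$; completeness of $B$ forces these to exist whenever $\neg\rho_i(p,a)$, and by (C4) at most $i$ per triple, so iterating terminates finitely. Let $B^{\ast,O}$ be the substructure of $B^\ast$ with $P$-part restricted to $P(C_0)$ and $O$-part $O(B^\ast)$; the assumption $B \geq C$ ensures $B^{\ast,O} \geq C_0$ (no new $R_i$-neighbor of a $C_0$-element can be $\rho_i$-connected to a $P(C_0)$-element, else it would belong to $C$; and paths between $O(C_0)$-nodes passing through $O(B)\setminus O(C)$ are impossible). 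Apply $T_3$ to embed $B^{\ast,O} \hookrightarrow \mathbb{M}$ over $C_0$, giving $\iota_O$.

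Then for each $p \in P(B^\ast) \setminus P(C_0)$ we use $T_4$ to realize a suitable image $\iota(p) \in P(\mathbb{M})$: decompose $\iota_O(O(B^\ast))$ into its finitely many connected components, which lie in distinct components of $O(\mathbb{M})$ (hence are pairwise $2^n$-far) by the tree structure of $O(\mathbb{M})$ (C2). To also witness the finitely many Closure-(i) $\Pi_1$-statements in $\Sigma_0$ involving $\iota(p)$, extend each connected piece to also contain the finitely many candidate extras $z$ appearing in these formulas, assigning the extras the coloring $\rho_{3-i}$; Claim~\ref{coloringinterpolation} extends the partial coloring to a (C4)-coloring on the enlarged connected pieces, and $T_4$ supplies the realization. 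Closure-(iii) is automatic: by (C2) paths in $\mathbb{M}$ are unique, so a path between two embedded $O$-elements is forced to be the $\iota_O$-image of the $B^\ast$-path, which lies in the embedding.

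\textbf{Main obstacle.} The subtlest point is the Closure-(i) case $\rho_i(p, a)$ with $N < i$, where completeness of $B$ does not fix the count of $\rho_i$-connected $R_i$-neighbors and (C4) alone leaves room for up to $i - N$ extras in $\mathbb{M}$. This is the step where we genuinely need the flexibility of $T_4$: for any finite list of candidate extras, we can choose $\iota(p)$ inducing any prescribed (C4)-compatible coloring that sends those extras to $\rho_{3-i}$, and the consistency of the desired partial coloring with (C4) at $\iota_O(a)$ is exactly the bound $N \leq i$ given by (C4) in $B$. This interpolation, together with saturation, upgrades local witnesses to a global $\iota(p)$.
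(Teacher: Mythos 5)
Your overall strategy---realize by saturation a partial type asserting both the quantifier-free diagram and the two clauses of closedness, verifying finite satisfiability via $T_3$ and $T_4$---has the right shape, but there is a genuine gap at the crucial step: you need to control where the tree-extension embedding lands in $\mathbb{M}$, and $T_3$ gives you no such control.

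Specifically, you assert that the finitely many connected components of $\iota_O(O(B^\ast))$ ``lie in distinct components of $O(\mathbb{M})$ (hence are pairwise $2^{n}$-far) by the tree structure of $O(\mathbb{M})$ (C2).'' This does not follow. (C2) says $\mathbb{M}$ is acyclic; it places no constraint on where the embedding produced by $T_3$ sends pieces of $B^\ast$ that are disconnected from $C_0$. Two $B^\ast$-components with no $B^\ast$-path between them can perfectly well be sent to vertices of $\mathbb{M}$ at distance $2$, or even into the same $\mathbb{M}$-component as $C_0$. Consequently your claim that ``Closure-(iii) is automatic: by (C2) paths in $\mathbb{M}$ are unique, so a path between two embedded $O$-elements is forced to be the $\iota_O$-image of the $B^\ast$-path'' is valid only for pairs $a, a'$ that are \emph{already} connected in $B^\ast$; for disconnected pairs there is no $B^\ast$-path to be ``forced onto,'' and nothing prevents a short $\mathbb{M}$-path from appearing, breaking condition (ii) of $\leq$. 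The same unjustified far-apartness is then used to license $T_4$ for the $P$-part, so that step inherits the gap.

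This is precisely where the paper's proof does extra work. Before invoking $T_3$, the paper first \emph{extends} the finite copacetic $O$-piece by attaching, for each connected component not meeting $C$, an $R_1$-path of length $> n$ (with all induced colorings sending the new nodes to $\rho_2$) joining it to a component meeting $C$. In the resulting $B''$, any two nodes that were previously disconnected are either at finite distance $> n$ within $B''$ itself or lie in distinct $C$-meeting components; in the first case the distance bound transfers to $\mathbb{M}$ because $\iota$ is an embedding, and in the second $C \leq \mathbb{M}$ guarantees no $\mathbb{M}$-path. Thus the distance control comes from the geometry of the \emph{domain} of the embedding, not from the ambient model, which is what your (C2)-based argument misattributes. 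Letting $n \to \infty$ and using saturation then realizes clause (ii). You would need to incorporate a version of this long-path trick into your finite-satisfiability argument for it to go through.

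A smaller point worth noting: $B^{\ast, O} \geq C_0$ does not hold for an arbitrary finite $C_0 \subseteq C$. The closure witnesses of $(b,a,i)$ for $b, a \in C_0$, and the $B$-paths between $O(C_0)$-nodes, land in $C$ by $C \leq B$, but need not land in $C_0$; you must first enlarge $C_0$ inside $C$ so that it is closed in $B^{\ast, O}$ before applying $T_3$. This is easily repaired, but should be made explicit.
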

\begin{proof}
    We prove the lemma in the following two cases:

    (1) $P(B) = P(C)$

    (2) $O(B) = O(C)$ and $|P(B \backslash C)|=1$.

    These cases suffice, because the closed extension $C \leq B$ can be decomposed into a closed extension satisfying (1) followed by an ascending chain of (obviously closed) extensions satisfying (2), and the property $\iota(B) \leq \mathbb{M}$ is clearly preserved under taking unions.

    To prove case (1), by completeness of $B$ and saturatedness of $\mathbb{M}$, it suffices to find, for arbitrarily large $n < \omega$, an embedding $\iota_{n}: B \hookrightarrow \mathbb{M}$ that is the identity on $C$ and so that points of $\iota_{n}(B)$ that are not connected by a path in $\iota_{n}(B)$ have distance at least $n$ in $\mathbb{M}$. We claim that, for every copacetic $B' \geq C$, and any points $b, b' \in O(B)$ so that $b$ and $b'$ belong to distinct connected components of $B$ and $b'$ belongs to a connected component of $B$ not containing any point of $C$, there is some copacetic $B'' \supseteq B'$ so that $C \leq B''$, that consists of $B'$ together with a path of from $b$ to $b'$ of length greater than $n$. One way to do this is to add an $R_{1}$-path of length greater than $n$ (and greater than $2$) between $b$ and $b'$, so that all induced (C4)-colorings color the new nodes of this path by $\rho_{2}$.    By choice of induced colorings, the resulting $\mathcal{L}$-structure $B''$ will satisfy (C4) and condition (i) of $C \leq B$, and $B''$ will also satisfy condition (ii) of $C \leq B$ by the assumption of what connected components of $B''$ have $b$ and $b'$ as members. By repeatedly applying this claim to connect each connected component of $B$ not meeting $C$ to some fixed connected component of $B$ meeting $C$ (or an arbitrary fixed connected component of $B$, if $C$ is empty), we then obtain a copacetic $\mathcal{L}$-structure $B' \supseteq B$ with $P(B') = P(C)$ and $B' \geq C$ such that, for any $b, b' \in B$ that are not connected in $B$, either $b$ and $b'$ have finite distance greater than $n$ in $B'$, or they belong to different connected components of $B'$ meeting $C$. Now suppose $\iota': B' \hookrightarrow \mathbb{M}$ is any embedding restricting to the identity on $C$; then for $\iota'(b), \iota'(b') \in \iota'(B)$, that are not connected in $\iota'(B)$, either $b$ and $b'$ have finite distance greater than $n$ in $B'$, so $\iota(b)$ and $\iota(b')$ have distance greater than $n$ in $\mathbb{M}$ as desired, or $b$ and $b'$ belong to different connected components of $B'$ meeting $C$, so $\iota(b)$ and $\iota(b')$ belong to different connected components of $\iota(B')$ meeting $C$, so are not connected in $\mathbb{M}$ because $C \leq \mathbb{M}$; then $\iota_{n} = \iota'|_{B}$ is as desired.  So if we can show that, for any small, copacetic $\mathcal{L}$-structure $B'\geq C$ with $P(B') = P(C)$, there is an embedding $\iota : B' \to \mathbb{M}$ that is the identity on $C$ (with no additional requirements on $\iota(B)$), we will have proven case (1). Note that if $B_{0} \subseteq B'$, $B_{0} \cap C \leq B_{0} $, so we may assume that $B'$ is finite by saturatedness of $\mathbb{M}$. Then we can just apply tree extension.

    To prove case (2), let $\rho_{1}, \rho_{2}$ be the (C4)-coloring induced by $p$, where $\{p\} = P(B\backslash C)$, on $C$. Since $B$ is complete and $C \leq \mathbb{M}$, any embedding $\iota: B \to \mathbb{M}$ with $\iota|_{C} = \mathrm{id}$ will satisfy $\iota(B) \leq \mathbb{M}$. By saturatedness of $\mathbb{M}$ it suffices to find, for every finite $P_{0} \subset P(C)$ and finite $C_{0} \subset O(C)$,  some $p \in \mathbb{M} \backslash P_{0}$ inducing the (C4)-coloring $\rho_{1}|_{C_{0}}, \rho_{2}|_{C_{0}}$ on $C_{0}$. We may enlarge $C_{0}$ to a finite subset of $O(C)$ consisting of a finite union of $n$ connected sets that are not connected in $C$, so by condition (ii) of $C \leq \mathbb{M}$, are not connected in $\mathbb{M}$; in particular, any two of them have distance greater than $2^{n}$ in $\mathbb{M}$. So by parameter introduction, we may find infinitely many $p \in P(\mathbb{M})$ inducing the (C4)-coloring $\rho_{1}|_{C_{0}}, \rho_{2}|_{C_{0}}$ on $C_{0}$, so in particular, one not belonging to $P_{0}$.

\end{proof}
We now fix a sufficiently saturated $\mathbb{M} \models T^{\not\exists}$, which we take as the ambient model; for $p \in P(\mathbb{M})$, fix the notation, $p^{j}_{\to^{i}} =:p^{\mathbb{M}, j}_{\to^{i}}$   The following quantifier elimination is a corollary of the above:
\begin{cor}
\label{quantifierelimination} Let $A, B \leq \mathbb{M}$. Then if $\mathrm{qftp}_{\mathcal{L}}(A)=\mathrm{qftp}_{\mathcal{L}}(B)$, $\mathrm{tp}(A) = \mathrm{tp}(B)$.

\end{cor}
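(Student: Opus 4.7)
The plan is to deduce this quantifier-elimination corollary from Lemma \ref{embedding} via a back-and-forth argument. The hypothesis yields an $\mathcal{L}$-isomorphism $f : A \to B$ (identifying $A$ and $B$ as enumerated tuples), and the goal is to show that $f$ is elementary. By saturation of $\mathbb{M}$, it suffices to verify that the collection of partial $\mathcal{L}$-isomorphisms $g : A' \to B'$ between small substructures of $\mathbb{M}$ with $A', B' \leq \mathbb{M}$ and extending $f$ forms a back-and-forth system.

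The key preparatory step is a closure operation: given a small $A_0 \leq \mathbb{M}$ and an element $c \in \mathbb{M}$, I would construct a smallest substructure $\mathrm{cl}(A_0 c) \leq \mathbb{M}$ containing $A_0 \cup \{c\}$ by iteratively adjoining the elements $b^{\mathbb{M},j}_{\to^i}(a)$ demanded by condition (i) and all intermediate vertices of $R_1 \vee R_2$-paths in $\mathbb{M}$ between $O$-sort vertices of the current substructure demanded by condition (ii); after $\omega$ stages one obtains a small closed substructure of $\mathbb{M}$. Moreover, $\mathrm{cl}(A_0 c)$ is automatically complete: since $\mathbb{M} \models T_2$, every witness $b^{\mathbb{M},j}_{\to^i}(a)$ required by $T_2$ exists in $\mathbb{M}$ and by condition (i) already lies in $\mathrm{cl}(A_0 c)$.

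For the back-and-forth step, suppose $g : A' \to B'$ is a partial $\mathcal{L}$-isomorphism between small closed substructures of $\mathbb{M}$ extending $f$, and let $c \in \mathbb{M}$. Set $A'' := \mathrm{cl}(A' c)$ and transport its $\mathcal{L}$-structure across $g$ to obtain a small complete copacetic extension $B' \leq B''$ together with an extension $g' : A'' \to B''$ of $g$. Applying Lemma \ref{embedding} to the closed inclusion $B' \leq B''$ inside $B' \leq \mathbb{M}$ produces $\iota : B'' \hookrightarrow \mathbb{M}$ fixing $B'$ with $\iota(B'') \leq \mathbb{M}$; then $\iota \circ g' : A'' \to \iota(B'')$ extends $g$ and brings $c$ into its domain, and the symmetric step on the $B$-side is identical. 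The main thing to check carefully is that $\mathrm{cl}(A_0 c)$ is simultaneously small, closed in $\mathbb{M}$, and complete, so that Lemma \ref{embedding} applies at each stage; once that closure operation is in hand, the back-and-forth closes off and, together with saturation of $\mathbb{M}$, yields $\mathrm{tp}(A) = \mathrm{tp}(B)$.
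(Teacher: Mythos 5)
Your proposal is correct and follows essentially the same route as the paper: both reduce to a back-and-forth argument via Lemma \ref{embedding}, using the fact that any $A' \leq \mathbb{M}$ together with a new element $c$ sits inside a small closed (hence, since $\mathbb{M} \models T_2$, automatically complete) substructure of $\mathbb{M}$. The paper states the existence of this small closed superset in one line and leaves the back-and-forth implicit, whereas you spell out the iterated closure construction and the transport-and-embed step explicitly, but the underlying argument is identical.
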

\begin{proof}
    For any $A \leq \mathbb{M}$ and $a \in \mathbb{M}$, there is some (small) $B \leq \mathbb{M}$ so that $Aa \subset B$, and in particular $A \leq B$. Because $B \leq \mathbb{M}$ and $\mathbb{M}$ satisfies completeness, $B$ is complete. So the corollary follows by Lemma \ref{embedding} by a back-and-forth argument.
\end{proof}

Next, we show that $T^{\not\exists}$ is $\mathrm{NSOP}_{1}$. By Theorem 9.1 of \cite{KR17}, it suffices to find an invariant ternary relation $\ind$ between subsets of $\mathbb{M}$ over models with the following properties:

(a) Strong finite character: For all $a, b$ and $M \models T$, if $a \nind_{M} b$, then there is some formula $\varphi(x, b) \in \mathrm{tp}(a/Mb)$, where $\varphi(x, y)$ has parameters in $M$, such that for every $a' \models \varphi(x, b)$, $a' \nind_{M} b$.

(b) Existence over models: for all $a$ and $M \models T$,  $a \ind_{M} M$.

(c) Monotonicity: For all $A' \subseteq A$, $B' \subseteq B$, $M \models T$, $A \ind_{M} B$ implies $A' \ind_{M} B'$.

(d) Symmetry: For all $a, b$ and $M \models T$, $a \ind_{M} b$ implies $b \ind_{M} a$ (and vice versa).

(e) The independence theorem: for any $a, a', b, c$ and $M \models T$,  $a \ind_{M} b$, $a' \ind_{M} c$, $b \ind_{M} c$ and $a \equiv_{M} a'$ implies that there is $a''$ with $a''b \equiv_{M} ab$, $a''c \equiv_{M} a'c$ and $a'' \ind_{M} bc$. 

First, note that for any $A$, there is $A' \supseteq A$ such that $A' \leq \mathbb{M}$ and for all $A''  \supseteq A$ with $A'' \leq \mathbb{M}$, $A' \subseteq A''$. Denote this $\mathrm{cl}(A)$; clearly, this is contained in $\mathrm{acl}(A)$ (and it can even be checked that $\mathrm{cl}(A) = \mathrm{acl}(A)$). This allows us to define $\ind$: $A \ind_{M} B$ if $\mathrm{cl}(MA) \cap \mathrm{cl}(MB) = M$, and every ($R_{1}(\mathbb{M}) \cup R_{2}(\mathbb{M})$)-path between a point of $O(\mathrm{cl}(MA) \backslash M)$ and $O(\mathrm{cl}(MB) \backslash M)$ contains a point of $M$. 

We see this has strong finite character: for $a \nind_{M} b$,  $a_{0} \in \mathrm{cl}(aM)$, $b_{0} \in \mathrm{cl}(bM)$ the endpoints of the path of length $n$ witnessing this, let $\varphi_{1}(y, b)$ isolate $b_{0}$ over $Mb$, $\varphi_{2}(x)$ say that $x$ is not within distance $n$ of the closest point of $M$ to some (any) $b'_{0} \models \varphi_{1}(x, b)$ (which would be required, should there be a (non-self-overlapping) path of length $n$ between $x$ and $b_{0}$ going through $M$), and $\varphi_{3}(x, b)$ say that there is a path of length $n$ from $x$ to some point $c$ satisfying $\models \varphi_{1}(c, b)$. Then $\varphi(x, b)=: \varphi_{2}(x) \wedge \varphi_{3}(x, b)$ suffices.

Existence over models, monotonicity, and symmetry are immediate. So it remains to show the independence theorem. By definition of $\ind$, we may assume $a=\mathrm{cl}(aM)$ and similar for $a', b, c$. We first give an analysis of the structure of pairs $a, b $ with $a \ind_{M} b$:

\begin{lemma}
  \label{pairstructure}  Let $a = \mathrm{cl}(aM)$, $b = \mathrm{cl}(bM)$, and $a \ind_{M} b$. Then $\mathrm{cl}(ab) = a^{*}b^{*}$, where for $P_{0}= P(ab)$, $a^{*}$ is the closure of $a$ under the functions $p^{j}_{\to^{i}}$ for $ p \in P_{0}$ (so in particular, $P(a^{*})=P(a)$), and $b^{*}$ is the closure of $b$ under these same functions. Every point of $O(a^{*} \backslash a)$ is connected by some ($R_{1} \vee R_{2}$)-path to a point of $a \backslash M$, and $a \leq a^{*}$; similarly for $b^{*}$.
\end{lemma}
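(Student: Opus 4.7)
The plan is to establish the set equality $\mathrm{cl}(ab) = a^* b^*$ by verifying both inclusions; the structural connectivity claim and $a \leq a^*$ then follow by inspection.

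For $a^* b^* \subseteq \mathrm{cl}(ab)$: since $\mathrm{cl}(ab) \leq \mathbb{M}$ is closed and contains $ab$, condition (i) of closedness with parameters $p \in P_0 = P(ab) \subseteq P(\mathrm{cl}(ab))$ forces $\mathrm{cl}(ab)$ to contain the full $p^j_{\to^i}$-closure of $a$ and of $b$, so $a^*, b^* \subseteq \mathrm{cl}(ab)$. For the reverse inclusion I would show $a^* b^* \leq \mathbb{M}$, whence $\mathrm{cl}(ab) \subseteq a^* b^*$ by minimality of $\mathrm{cl}(ab)$. Condition (i) of $a^* b^* \leq \mathbb{M}$ is immediate from the definitions of $a^*, b^*$ since $P(a^* b^*) = P_0$: for $a_0 \in O(a^*)$, the value $p^j_{\to^i}(a_0)$ lies in $a^*$ by construction, and symmetrically on the $b^*$-side.

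The substantive part is condition (ii): any $(R_1 \cup R_2)$-path in $\mathbb{M}$ between two points of $O(a^*) \cup O(b^*)$ stays inside $O(a^*) \cup O(b^*)$. For this I would first establish the structural claim, by induction on the layer-by-layer construction of $a^*$. A newly-added element $v = p^j_{\to^i}(v_0) \in O(a^* \setminus a)$ is $R_i$-adjacent to an earlier $v_0 \in O(a^*)$; by induction either $v_0 \in O(a)$ or $v_0$ is connected in $\mathbb{M}$ to some $w \in O(a \setminus M)$. In the base case $v_0 \in O(a \setminus M)$, the single edge $v$--$v_0$ suffices; if $v_0 \in O(M)$, I use $a \leq \mathbb{M}$ (condition (ii)) so that the path from $v_0$ to any $O(a \setminus M)$-point in its $\mathbb{M}$-connected component lies in $O(a) \subseteq O(a^*)$, prepended by the edge $v$--$v_0$. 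When $p \in P(a)$, in fact $v \in a$ already by $a \leq \mathbb{M}$ and this subcase is vacuous; when $p \in P(b) \setminus P(M)$, the value lies in $b$ by $b \leq \mathbb{M}$, which additionally confines it to $b^*$. The analogous argument yields the structural claim for $b^*$.

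Given the structural claim, I would verify condition (ii) using the forest structure of $\mathbb{M}$ (from (C2)): extending a given path $c_0 \ldots c_m$ with the structural anchors at $c_0$ and $c_m$ produces a walk between two points $w_0, w_m$ of $O(a) \cup O(b)$ whose unique tree-path reduction lies in $O(a) \cup O(b)$ by $a, b \leq \mathbb{M}$ together with the independence assumption (any cross-path between $O(a \setminus M)$ and $O(b \setminus M)$ must visit $O(M)$, allowing us to split at a point of $M$ and apply closedness of $a, b, M$ on each half). In the tree, any intermediate $c_k$ on $P(c_0, c_m)$ either lies on this reduced tree-path and so in $O(a) \cup O(b) \subseteq O(a^*) \cup O(b^*)$, or must agree with some vertex of the anchoring paths -- which themselves lie in $O(a^*) \cup O(b^*)$ by the inductive construction. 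Finally, $a \leq a^*$ is immediate from $a \leq \mathbb{M}$ and $a^* \subseteq \mathbb{M}$, both conditions of closedness for $a \leq a^*$ being inherited from those for $a \leq \mathbb{M}$; likewise $b \leq b^*$. The main obstacle is the forest/independence interaction in condition (ii): one needs a careful case analysis ensuring that the unique $\mathbb{M}$-path between the anchoring $O(a)$- and $O(b)$-points actually visits each $c_k$ (rather than bypassing via a dangling branch), which is where the combination of tree geometry, independence, and the structural claim must be leveraged simultaneously.
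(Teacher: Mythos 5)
Your proposal follows essentially the same route as the paper's proof: condition (i) of $a^*b^* \leq \mathbb{M}$ is immediate from the closure construction; the structural connectedness claim and $a \leq a^*$ are read off from the definitions of the $p^j_{\to^i}$ together with $M, a, b \leq \mathbb{M}$; one first shows $O(a)O(b) \leq \mathbb{M}$ (condition (ii)) using $a \ind_M b$ together with $a, b \leq \mathbb{M}$; and condition (ii) for $a^*b^*$ is then deduced from this plus the connectedness claim and the forest structure. The ``main obstacle'' you flag at the end --- guaranteeing that the reduced tree-path $[v_0,w_0]$ actually visits the intermediate $c_k$ --- is in fact no more explicit in the paper, which simply asserts that condition (ii) for $a^*b^*$ ``follows from'' $O(a)O(b) \leq \mathbb{M}$ and the connectedness of $a^*$, $b^*$ to $a$, $b$. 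It is, however, not a genuine obstruction: writing $[v_0,v]$, $[v,w]$, $[w,w_0]$ for the anchoring paths (inside $a^*b^*$) and the offending path (with interior outside $a^*b^*$), one checks that $[v_0,v]\cap[v,w]=\{v\}$ and $[w,w_0]\cap[v,w]=\{w\}$ (the interiors live on opposite sides of $a^*b^*$, and if, say, $w\in[v_0,v]$ then by uniqueness of tree-paths $[v,w]\subseteq[v_0,v]\subseteq a^*b^*$, a contradiction), and that $[v_0,v]\cap[w,w_0]=\emptyset$ (any common vertex would give a walk from $v$ to $w$ entirely inside $a^*b^*$, which in a tree must contain $[v,w]$). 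Hence the concatenation is the unique simple path $[v_0,w_0]$, forcing $[v,w]\subseteq[v_0,w_0]\subseteq O(a)O(b)$ by $O(a)O(b)\leq\mathbb{M}$ and giving the desired contradiction. So your outline is correct and aligns with the paper; you should just carry out this one small tree argument rather than defer it.
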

\begin{proof}
The second sentence follows from the definition of the $p^{j}_{\to^{i}}$, and the fact that $M, a, b \leq \mathbb{M}$; it remains to show that $a^{*}b^{*} \leq \mathbb{M}$. But $a^{*}b^{*}$ is the closure of $ab$ under the functions $p^{j}_{\to^{i}}$ for $ p \in P_{0}$, so is complete, and condition (i) is satisfied. Moreover, $O(a)O(b) \leq \mathbb{M}$ (i.e., condition (ii) of being closed in $\mathbb{M}$ holds for $ab$): any path with endpoints in $O(a)O(b)$ and intermediate points all in $O(\mathbb{M} \backslash ab)$ must, because $a \ind_{M} b$ and $M \subseteq a, b$, have both endpoints in either of $O(a)$ or $O(b)$, a contradiction because $O(a)$ and $O(b)$ are closed in $\mathbb{M}$. Because every point of $O(a^{*})O(b^{*})$  is connected to $ab$ by the second clause of this lemma, condition (ii) of $a^{*}b^{*} \leq \mathbb{M}$ then follows from $O(a)O(b) \leq \mathbb{M}$.
\end{proof}

In the following, for $P_{0} \subset P(\mathbb{M})$ and $A \subset \mathbb{M}$, let $\mathrm{cl}_{P_{0}}(A)$ denote the closure of $A$ under $p^{j}_{\to^{i}}$ for $p \in P_{0}$, as in Lemma \ref{pairstructure}. We want to find $\tilde{a}, \tilde{b}, \tilde{c}$ so that $\tilde{a}\tilde{b} \equiv_{M} ab$, $\tilde{a}\tilde{c} \equiv_{M} a'c$, $\tilde{b}\tilde{c} \equiv_{M} bc$, $\tilde{a} \ind_{M} \tilde{b}\tilde{c}$. Let $b^{*_{c}}= \mathrm{cl}_{P(b)P(c)}(b)$, $c^{*_{b}}= \mathrm{cl}_{P(b)P(c)}(c)$, $b^{*_{a}}= \mathrm{cl}_{P(a)P(b)}(b)$, $c^{*_{a'}}= \mathrm{cl}_{P(a')P(c)}(c)$, $a^{*_{b}}= \mathrm{cl}_{P(a)P(b)}(a)$, $a^{*_{c}}= \mathrm{cl}_{P(a')P(c)}(a')$. We now build a copacetic $\mathcal{L}$-structure extending $M$ as follows. Let $A_{0} =: \tilde{a}_{0}\tilde{b}_{0}\tilde{c}_{0}$ where $R_{1}, R_{2}$ is defined on $O(\tilde{a}_{0}), O(\tilde{b}_{0}), O(\tilde{c}_{0})$ so that these are freely amalgamated sets isomorphic to $O(a), O(b), O(c)$ over $O(M)$ (so in particular, $O(a) O(b ) \equiv^{\mathcal{L}-\mathrm{qftp}}_{O(M)} O(\tilde{a}_{0})O(\tilde{b}_{0})$ because $a \ind_{M} b$, and so on), and so that $P(\tilde{a}_{0}), P(\tilde{b}_{0}), P(\tilde{c}_{0})$ and $\rho_{1}, \rho_{2}$ are defined so that $\tilde{a}_{0}\tilde{b}_{0} \equiv^{\mathcal{L}-\mathrm{qftp}}_{M} ab$, $\tilde{a}_{0}\tilde{c}_{0} \equiv^{\mathcal{L}-\mathrm{qftp}}_{M} a'c$, $\tilde{b}_{0}\tilde{c}_{0} \equiv^{\mathcal{L}-\mathrm{qftp}}_{M} bc$; this makes sense because $a \equiv_{M} a'$. Then $A_{0}$ is copacetic, because the only way axiom (C4) can fail is, without loss of generality, when there is $p \in P(\tilde{a}_{0} \backslash M)$, $o_{1} \in O(\tilde{b}_{0} \backslash M)$, $o_{2} \in O(\tilde{c}_{0} \backslash M)$ so that $o_{1}, o_{2}$ lie on the boundary of some common $R_{1}$-ball and $A_{0} \models \rho_{1}(p, o_{1}) \wedge \rho_{1}(p, o_{2})$. Because $O(\tilde{b}_{0}), O(\tilde{c}_{0})$ are freely amalgamated over $M$, the only way the former can happen is for $o_{1}$ and $o_{2}$ to have a common $R_{1}$-neighbor $m \in M$. But then (say) $o_{1} = p^{1}_{\to^{1}}(m) \in \tilde{a}_{0}$, contradicting $\tilde{a}_{0} \cap \tilde{b}_{0} = M$.  Now extend $A_{0}$ to $A_{1}=: \tilde{a}^{*_{\tilde{b}}}\tilde{a}^{*_{\tilde{c}}}\tilde{b}^{*_{\tilde{a}}}\tilde{b}^{*_{\tilde{c}}}\tilde{c}^{*_{\tilde{a}}}\tilde{c}^{*_{\tilde{b}}} $, where 

(1)$\tilde{a}^{*_{\tilde{b}}}= \tilde{a}_{0} \cup O(\tilde{a}^{*_{\tilde{b}}} \backslash \tilde{a}_{0})$, $\tilde{a}^{*_{\tilde{c}}}= \tilde{a}_{0} \cup O(\tilde{a}^{*_{\tilde{c}}} \backslash \tilde{a}_{0})$, $\tilde{b}^{*_{\tilde{a}}}= \tilde{b}_{0} \cup O(\tilde{b}^{*_{\tilde{a}}} \backslash \tilde{b}_{0})$, $\tilde{b}^{*_{\tilde{c}}}= \tilde{b}_{0} \cup O(\tilde{b}^{*_{\tilde{c}}} \backslash \tilde{b}_{0})$, $\tilde{c}^{*_{\tilde{a}}}= \tilde{c}_{0} \cup O(\tilde{c}^{*_{\tilde{a}}} \backslash \tilde{c}_{0})$, $\tilde{c}^{*_{\tilde{b}}}= \tilde{c}_{0} \cup O(\tilde{c}^{*_{\tilde{b}}} \backslash \tilde{c}_{0})$, 

(2) $O(\tilde{a}^{*_{\tilde{b}}} \backslash \tilde{a}_{0})$, $O(\tilde{a}^{*_{\tilde{c}}} \backslash \tilde{a}_{0})$, $O(\tilde{b}^{*_{\tilde{a}}} \backslash \tilde{b}_{0})$, $O(\tilde{b}^{*_{\tilde{c}}} \backslash \tilde{b}_{0})$, $O(\tilde{c}^{*_{\tilde{a}}} \backslash \tilde{c}_{0})$, $O(\tilde{c}^{*_{\tilde{b}}} \backslash \tilde{c}_{0})$ are pairwise disjoint and disjoint from $A_{0}$

(3) The only $R_{i}$-edges of $A_{1}$ are those of $A_{0}$, as well as those required to make $O(\tilde{a}^{*_{\tilde{b}}}) \equiv^{\mathcal{L}-\mathrm{qftp}}_{O(M)} O(a^{*_{b}})$,  $O(\tilde{a}^{*_{\tilde{c}}}) \equiv^{\mathcal{L}-\mathrm{qftp}}_{O(M)} O(a'^{*_{c}})$, $O(\tilde{b}^{*_{\tilde{a}}}) \equiv^{\mathcal{L}-\mathrm{qftp}}_{O(M)} O(b^{*_{a}})$, $O(\tilde{b}^{*_{\tilde{c}}}) \equiv^{\mathcal{L}-\mathrm{qftp}}_{O(M)} O(b^{*_{c}})$, $O(\tilde{c}^{*_{\tilde{a}}}) \equiv^{\mathcal{L}-\mathrm{qftp}}_{O(M)} O(c^{*_{a'}})$, $O(\tilde{c}^{*_{\tilde{b}}}) \equiv^{\mathcal{L}-\mathrm{qftp}}_{O(M)} O(c^{*_{b}})$.

(4) Let us extend $\rho_{1}\rho_{2}$ where required so that $\tilde{a}^{*_{\tilde{b}}} \tilde{b}^{*_{\tilde{a}}} \equiv^{\mathcal{L}-\mathrm{qftp}}_{M} a^{*_{b}} b^{*_{a}}$, $\tilde{a}^{*_{\tilde{c}}} \tilde{c}^{*_{\tilde{a}}} \equiv^{\mathcal{L}-\mathrm{qftp}}_{M} a'^{*_{c}} c^{*_{a'}}$, $\tilde{b}^{*_{\tilde{c}}} \tilde{c}^{*_{\tilde{b}}} \equiv^{\mathcal{L}-\mathrm{qftp}}_{M} b^{*_{c}} c^{*_{b}}$. Because $\tilde{a}^{*_{\tilde{b}}} \tilde{b}^{*_{\tilde{a}}}, \tilde{a}^{*_{\tilde{c}}} \tilde{c}^{*_{\tilde{a}}}, \tilde{b}^{*_{\tilde{c}}} \tilde{c}^{*_{\tilde{b}}} $ are already known to satisfy (C4), a failure of (C4), which would then be witnessed by $p \in P(A_{1})$, $o_{1}, o_{2}, o_{3} \in O(A_{1})$ (with the last two perhaps equal), could now happen in the following two cases (among the instances where the $\rho_{i}$ are yet defined), both of which we rule out. First, one of the $o_{i}$ is in $a^{\dagger} \backslash{A_{0}}$, where $a^{\dagger}$ is one of $\tilde{a}^{*_{\tilde{b}}}\tilde{a}^{*_{\tilde{c}}}, \tilde{b}^{*_{\tilde{a}}}\tilde{b}^{*_{\tilde{c}}}, \tilde{c}^{*_{\tilde{a}}}\tilde{c}^{*_{\tilde{b}}}$, and the other is in $A_{1} \backslash a^{\dagger} $. But then, by the connectedness claim in the second clause of Lemma \ref{pairstructure}, (3) and the construction of $O(A_{0})$ tell us that these two $o_{i}$ must have distance at least $2$ apart, so a failure of (C4) cannot arise here. The other case is where $o_{1}, o_{2}, o_{3} \in a^{\dagger} $ for $a^{\dagger}$ one of $\tilde{a}^{*_{\tilde{b}}}\tilde{a}^{*_{\tilde{c}}}, \tilde{b}^{*_{\tilde{a}}}\tilde{b}^{*_{\tilde{c}}}, \tilde{c}^{*_{\tilde{a}}}\tilde{c}^{*_{\tilde{b}}}$, and some two do not belong to the same $\tilde{a}^{*_{\tilde{b}}},\tilde{a}^{*_{\tilde{c}}},\tilde{b}^{*_{\tilde{a}}},\tilde{b}^{*_{\tilde{c}}},\tilde{c}^{*_{\tilde{a}}},\tilde{c}^{*_{\tilde{b}}}$; then $p \in a^{\dagger}$ because this is the only way the $\rho_{i}$ can be defined so far for $p$, $o_{1}, o_{2}, o_{3}$. Then these two $o_{j}$, say $o_{1}$ and $o_{2}$ must satisfy $\rho_{i}(p, o_{1}), \rho_{i}(p, o_{2}) $ for some $i \in \{1, 2\}$, and must be common $R_{i}$-neighbors of the evident $\tilde{a}, \tilde{b}, \tilde{c}$ while lying outside of this $\tilde{a}, \tilde{b}, \tilde{c}$. But this is impossible, by the claim $a \leq a^{*}$, $b \leq b^{*}$ of Lemma \ref{pairstructure}. Therefore, we do not yet get a failure of (C4), and it remains to extend $\rho_{1}, \rho_{2}$ to get (C3) while maintaining (C4), which we do in the next step.

(5) By the claim $a \leq a^{*}$, $b \leq b^{*}$ in the second clause of Lemma \ref{pairstructure}, we can use Claim \ref{coloringextension} to extend $\rho_{1}, \rho_{2}$ where not yet defined, maintaining (C4) by the description of the $R_{1}, R_{2}$-structure in (3), and thereby producing a copacetic $\mathcal{L}$-structure.

Observe also the following:

(6) In $A_{0}$, there is no path between $\tilde{a}$ and $\tilde{b}\tilde{c}$ not going through $M$. So by the connectedness claim in Lemma \ref{pairstructure} applied to $c^{*_{b}}, b^{*_{c}}$, in $A_{1}$ there is no path between $\tilde{a}$ and $\tilde{c}^{*_{\tilde{b}}} \tilde{b}^{*_{\tilde{c}}}$ not going through $M$. 

(7). By construction, $O(\tilde{a}), O(\tilde{b}), O(\tilde{c})$ are closed in $O(A_{1})$. So by the proof of the first clause of Lemma \ref{pairstructure}, $\tilde{a}^{*_{\tilde{b}}} \tilde{b}^{*_{\tilde{a}}}, \tilde{a}^{*_{\tilde{c}}} \tilde{c}^{*_{\tilde{a}}}, \tilde{b}^{*_{\tilde{c}}} \tilde{c}^{*_{\tilde{b}}}$ are each closed in $A_{1}$ (and are also each complete).

Finally, we extend $A_{1}$ to a \textit{complete} copacetic $\mathcal{L}$-structure, so that (6) and (7) still hold replacing $A_{1}$ with $A_{2}$; since $\tilde{a}^{*_{\tilde{b}}} \tilde{b}^{*_{\tilde{a}}}, \tilde{a}^{*_{\tilde{c}}} \tilde{c}^{*_{\tilde{a}}}, \tilde{b}^{*_{\tilde{c}}} \tilde{c}^{*_{\tilde{b}}}$  are complete, to preserve (7) we must just preserve clause (ii). We can extend $A_{1}$ to a complete copacetic $\mathcal{L}$-structure just by repeated applications of the proof of (ii). But notice that this proceeds just by successively adding nodes in the sort $O$ with exactly one $R_{1} \vee R_{2}$-neighbor in the previously added nodes, so adds no new paths between nodes in $O(A_{1})$. So (6) and (7) are in fact preserved.

Note that $A \leq B \leq C$ implies $A \leq C$, so by (4) and (7) (i.e., the version where $A_{1}$ is replaced with $A_{2}$), $M \leq A_{2}$.  Now use Lemma \ref{embedding} to obtain an embedding $\iota: A_{2} \hookrightarrow \mathbb{M}$ which is the identity on $M$, and such that $\iota(A_{2}) \leq  \mathbb{M}$. Let $\tilde{a}=\iota(\tilde{a}_{0})$,  $\tilde{b}=\iota(\tilde{b}_{0})$,  $\tilde{c}=\iota(\tilde{c}_{0})$. Then (using Corollary \ref{quantifierelimination}) by (4), (7) (again, the version where $A_{1}$ is replaced with $A_{2}$), $\iota(A_{2}) \leq M$, and the first clause of Lemma \ref{pairstructure}, $\tilde{a}\tilde{b} \equiv_{M} ab$, $\tilde{a}\tilde{c} \equiv_{M} a'c$, $\tilde{b}\tilde{c} \equiv_{M} bc$. Moreover, by (6) (yet again, the version where $A_{1}$ is replaced with $A_{2}$) and $\iota(A_{2}) \leq M$ , $\tilde{a} \ind_{M} \tilde{b}\tilde{c}$. So we have proven the independence theorem for $\ind$, and $T^{\not\exists}$ is $\mathrm{NSOP}_{1}$.

It remains to show that $T^{\not\exists}$ does not satisfy the existence axiom. Let $p(x)$ be the unique type in the sort $P$ (in one variable) over $\emptyset$. Let $o \in O(\mathbb{M})$. Then $p(x) \vdash \rho_{1}(x, o) \vee \rho_{2}(x, o) $, by (C3). We show that $\rho_{1}(x, o)$ $2$-divides over $\emptyset$. We can find an $\emptyset$-indiscernible sequence $\{o_{i}\}_{i < \omega}$, $o_{0} = o$, so that the $o_{i}$ lie on the boundary of some fixed $R_{1}$-ball of radius $1$. Then $\{\rho(x, o_{i})\}_{i < \omega}$ is $2$-inconsistent by (C4), so $\rho_{1}(x, o_{i})$ $2$-divides over $\emptyset$.  That $\rho_{2}(x, o_{i})$ $3$-divides over $\emptyset$ will be similar. So $p \in S(\emptyset)$ forks over $\emptyset$, violating the existence axiom.

This proves the main theorem of this paper, Theorem \ref{main}, and answers the main question, Question \ref{mainquestion}.

\begin{remark}\label{banff}
    It is not too hard to show that $\ind$ above even satisfies the following axiom:

    (f) Witnessing: Let $a  \nind_{M} b$, and let $\{b_{i}\}_{i < \omega}$, $b_{0} = b$, be an $M$-indiscernible sequence with $b_{i} \ind_{M} b_{0} \ldots b_{i-1}$ for all $i < \omega$. Then there is a formula $\varphi(x, b) \in \mathrm{tp}(a/Mb)$, $\varphi(x, y) \in L(M)$, so that $\{\varphi(x, b_{i})\}_{i < \omega}$ is inconsistent.

    Theorem 6.11 of \cite{KR19} says that if an invariant ternary relation $\ind$ between subsets of $\mathbb{M}$ over models satisfies strong finite character, existence over models, monotonicity, symmetry, the independence theorem and witnessing, then $\ind$ coincides with Kim-independence $\ind^{K}$ (Definition \ref{kimindependence}.) So in $T^{\not\exists}$, Kim-independence (over models) is given by $\ind$. Because $T^{\not\exists}$ does not satisfy the existence axiom, the results on Kim-independence over \textit{sets} from \cite{DKR22}, \cite{CKR20}) do not apply to $T^{\not\exists}$. But note that it makes sense to define $a \ind_{C} b$ the same way as above when ($\mathrm{acl}(C) =$) $ \mathrm{cl}(C) = C$, and, when $C$ is any set, define $a \ind_{C} b$ by $a \ind_{\mathrm{cl}(C)} b$, giving a ternary relation on sets. Over sets, by the same proofs as above, $\ind$ satisfies the analogues of strong finite character, monotonicity, symmetry, the independence theorem (where $\equiv_{C}$ is replaces by $\equiv^{\mathrm{Lstp}}_{C}$, though this is the same as $\equiv_{\mathrm{cl}(C)}$, with respect to which the independence theorem holds for $\ind$), and witnessing; moreover $\ind$ satisfies a stronger version of existence over sets:

    (b$'$) Existence and extension over sets: for any $a, C$ and $B' \supseteq B \supseteq C$, $a \ind_{C} C$, and if $a \ind_{C} B$ there is $a' \equiv_{B} a$ with $a' \ind_{C} B'$. 

    Ramsey, in a presentation at the Banff International Research Station on joint work with Itay Kaplan (\cite{Rbanff2023}), defines the assertion that \textit{Kim-independence is defined over sets} to mean that there is a ternary relation $\ind$ between sets satisfying the analogues over sets of  strong finite character, monotonicity, symmetry, the independence theorem, and witnessing, as well as existence and extension over sets, and shows that such $\ind$ is uniquely determined when it exists. So by \cite{DKR22}, \cite{CKR20}, in any $\mathrm{NSOP}_{1}$ theory satisfying the existence axiom, Kim-independence is defined over sets in the sense of \cite{Rbanff2023}. But also, despite $T^{\not\exists}$ not satisfying the existence axiom, Kim-independence is defined over sets in this sense in the theory $T^{\not\exists}$, even if the results of \cite{DKR22} on Kim-independence as defined by Dobrowolski, Kim, and Ramsey (Definition \ref{setkimindependence} above) do not apply in $T^{\not \exists}$.\footnote{In fact, in $T^{\not \exists}$, $\ind$ actually coincides with $\ind^{K}$ as defined by \cite{DKR22} (i.e. Kim-forking independence with respect to nonforking Morley sequences, Definition \ref{setkimindependence} above), so the \textit{conclusions} of, say, Corollary 4.9 or Theorem 5.6 of \cite{DKR22} hold: $\ind^{K}$ as defined there is symmetric, and satisfies the independence theorem, over arbitrary sets. (If $a \nind_{C} b $ and $aC$ is algebraically closed, then $\mathrm{tp}(a/Cb)$ implies a finite disjunction of formulas of the form $\varphi(x, b')$, where $b' \notin \mathrm{cl}(C)$ is a \textit{singleton} of $O$ or $P$ and $\varphi(x, b')$ either says that $x = b'$ or implies that there is a path between $x$ and $b'$ with no points in $\mathrm{cl}(C)$, and a formula of either kind divides over $C$ with respect to a $\mathrm{cl}(C)$-invariant Morley sequence. On the other hand, $a \ind_{C} b$ implies $a \ind_{C} M$ for $M$ some $|C|^{+}$-saturated model containing $Cb$, so $a \ind^{K}_{C} M$ by the independence theorem and $|C|^{+}$-saturatedness of $M$; see the clause $\ind \Rightarrow \ind^{K}$ of Theorem 9.1 of \cite{KR17}, and the standard argument that forking-dependence on a sufficiently saturated model implies dividing-dependence on that model.) But Proposition 4.9 of \cite{DKR22} fails--it is not necessarily true that $\varphi(x, b)$ forks over $C$ with respect to nonforking Morley sequences if and only if it divides with respect to nonforking Morley sequences. For example, for $o \in O, p \in P$, let $\varphi(x, op) =: x = p$; then $\varphi(x, op)$ does not divide with respect to a nonforking Morley sequence over $\emptyset$ (i.e. Kim-divide over $\emptyset$, as in Definition \ref{setkimindependence}), because there \textit{are} no nonforking Morley sequences over $\emptyset$ starting with $op$), but it implies $\tilde{\varphi}(x, p) =: x = p$, which does divide with respect to a nonforking Morley sequence over $\emptyset$, so $\varphi(x, op)$ Kim-forks over $\emptyset$. Moreover, Kim's lemma, Theorem 3.5 of \cite{DKR22}, is also false in $T^{\not \exists}$:  $\varphi(x, op) $ divides over $\emptyset$ with respect to \textit{all} nonforking Morley sequences over $\emptyset$ starting with $op$, but not with respect to \textit{some} nonforking Morley sequence over $\emptyset$ starting with $op$!

    By way of obtaining an $\mathrm{NSOP}_{1}$ theory where $\ind^{K}$ as defined over sets by \cite{DKR22} (Defintion \ref{setkimindependence} here) does not, say, satisfy the independence theorem, we expect that, by an extremely tedious verification, $T^{\not\exists}$ can be shown to eliminate $\exists^{\infty}$. So by Theorem 5 of \cite{W75} and Theorem 4.5 of \cite{KR18}, the generic expansion of $T^{\not\exists}$ by functions from $P$ to $O$ and from $O$ to $p$ (i.e. the model companion of models of (the Morleyization of) $T^{\not\exists}$ expanded by a unary function from sort $P$ to sort $O$ and a unary function from sort $O$ to sort $P$) will exist and have $\mathrm{NSOP}_{1}$, and no consistent formula can Kim-divide over $\emptyset$, because every nonempty parameter will have an element of $O$ and an element of $P$ in its definable closure, so can be shown to begin no Morley sequence over $\emptyset$ as in the original proof that $T^{\not\exists}$ does not satisfy the existence axiom. So, using Definition \ref{setkimindependence} to define $\ind^{K}$ over arbitrary sets, any set will be Kim-independent over $\emptyset$ from any nonempty set.

    } So the results stated in \cite{Rbanff2023} are independent of the previous work on the existence axiom.
    \end{remark}

\section{Quantitative results}

Doborowolski, Kim, and Ramsey show, in Remark 6.7 of \cite{DKR22}, that in a theory without the strict order property (i.e. an $\mathrm{NSOP}$ theory), the failure of the existence axiom cannot be witnessed by two formulas that $2$-divide:

\begin{fact}\label{existencensop}
    Let $T$ be $\mathrm{NSOP}$, and $p \in S(A)$. Then there are no formulas $\varphi_{1}(x, b_{1})$, $\varphi_{2}(x, b_{2})$, each of which $2$-divide over $A$, such that $p \vdash \varphi_{1}(x, b_{1}) \vee \varphi_{2}(x, b_{2})$.
\end{fact}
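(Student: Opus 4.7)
I would prove the contrapositive: assuming such $p$, $\varphi_1(x, b_1), \varphi_2(x, b_2)$ exist (each $\varphi_i$ 2-dividing over $A$, with $p \vdash \varphi_1(x, b_1) \vee \varphi_2(x, b_2)$), derive that $T$ has SOP. The first step is to extract an $A$-indiscernible sequence of pairs $(\bar c^i)_{i < \omega}$, where $\bar c^i = (b_1^i, b_2^i)$ and $\bar c^0 \equiv_A (b_1, b_2)$, such that both coordinate sequences simultaneously witness 2-dividing of their respective formulas: $\{\varphi_1(x, b_1^i)\}_{i < \omega}$ and $\{\varphi_2(x, b_2^i)\}_{i < \omega}$ are each 2-inconsistent. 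The alignment of the two given 2-dividing witnesses into a single joint $A$-indiscernible sequence of pairs is a small technical point that requires a Ramsey/EM-type extraction starting from both witnesses at once.

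Next, I would consider $\psi(x, \bar y) := \varphi_1(x, y_1) \vee \varphi_2(x, y_2)$ and the directed definable relation $R(\bar y, \bar y') := \exists x \, [\varphi_1(x, y_1) \wedge \varphi_2(x, y_2')]$. For $i \neq j$, distributing $\psi(x, \bar c^i) \wedge \psi(x, \bar c^j)$ produces four conjunctions; the two ``diagonal'' ones matching two $\varphi_1$'s or two $\varphi_2$'s at distinct indices are inconsistent by the 2-dividing hypothesis, so the remaining ``mixed'' conjunctions show that $\psi(x, \bar c^i) \wedge \psi(x, \bar c^j)$ is consistent if and only if $R(\bar c^i, \bar c^j) \vee R(\bar c^j, \bar c^i)$ holds. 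By $A$-indiscernibility of $(\bar c^i)_i$, the truth value of $R(\bar c^i, \bar c^j)$ depends only on the order type of $\{i, j\}$.

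The heart of the argument is then a case analysis. The clean case is when $R$ is asymmetric on the sequence, say $R(\bar c^i, \bar c^j)$ holds precisely when $i < j$; here $R$ together with $(\bar c^i)_i$ directly witnesses SOP, via the standard fact that a formula $\varphi$ admitting an infinite sequence $(\bar a_i)_i$ with $\varphi(\bar a_i, \bar a_j) \iff i < j$ yields the strict order property (after a standard compactness/transitive-closure reduction). The symmetric sub-cases are the expected main obstacle. If $R$ is identically false on pairs $i \neq j$, then $\psi$ itself 2-divides on the sequence, and one must use the finer split $\psi = \varphi_1 \vee \varphi_2$ together with the $A$-completeness of $p$ (which constrains which of $\varphi_1(x, b_1^i), \varphi_2(x, b_2^i)$ different realizations of $p$ can choose at each index) to extract across the sequence an asymmetric configuration reducing to the clean case. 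If $R$ is identically true on distinct pairs, one has abundant witnesses to both directions at every $i, j$; here the plan is to assemble these systematically along the indiscernible sequence, tracking which of $\varphi_1, \varphi_2$ each witness uses on each side, in order to carve out the required asymmetric chain. Finishing these symmetric cases is the delicate step; the asymmetric case is where SOP appears most directly.
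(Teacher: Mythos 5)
The paper does not prove this statement; it quotes it as a Fact from Remark~6.7 of \cite{DKR22}, so there is no in-paper argument to compare against. Evaluating your proposal on its own merits, it contains a genuine error, and it occurs precisely in the case you describe as ``clean'' and as where ``SOP appears most directly.'' The assertion that a formula $R$ admitting an indiscernible sequence $(\bar a_i)_i$ with $R(\bar a_i,\bar a_j)\iff i<j$ yields the strict order property is false: such a configuration witnesses only the order property (instability), which is strictly weaker than $\mathrm{SOP}$. Concretely, in the random graph take $\bar a_i=(a_i^1,a_i^2)$ with $E(a_i^1,a_j^2)$ holding iff $i<j$ (which by symmetry of $E$ forces $E(a_i^2,a_j^1)$ iff $i>j$) and all other edge data order-determined so that $(\bar a_i)_i$ is indiscernible; then for $R(\bar x,\bar y):=E(x_1,y_2)\wedge\neg E(x_2,y_1)$ one has $R(\bar a_i,\bar a_j)\iff i<j$, yet the random graph is simple, hence $\mathrm{NSOP}$. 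There is also no general ``transitive-closure reduction'': the transitive closure of a definable relation need not be definable, and restricting to an indiscernible sequence does not repair this. So the asymmetric case does not directly yield $\mathrm{SOP}$, and the symmetric cases, which you acknowledge as ``the delicate step,'' are left unfinished.

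There is a secondary issue in your opening step. Aligning the two given $2$-dividing witnesses into a single $A$-indiscernible sequence of pairs $(\bar c^i)_i$ with $\bar c^0\equiv_A(b_1,b_2)$ in which both coordinate sequences are simultaneously $2$-inconsistent is not in general possible. In the circular order, with $b_1=(a,b)$, $b_2=(b,a)$ and $\varphi_1=\varphi_2$ the arc formula, $\mathrm{tp}(b_1b_2/\emptyset)$ forces $b_2^i$ to be the reversal of $b_1^i$; so if the arcs $\varphi_1(\mathbb M,b_1^i)$ are pairwise disjoint then the complementary arcs $\varphi_2(\mathbb M,b_2^i)$ pairwise overlap, and cannot be $2$-inconsistent. (The circular order has $\mathrm{SOP}$, so this is not a counterexample to the Fact; it only shows the joint extraction you rely on is not automatic under the stated hypotheses.) Together with the failure of the asymmetric case, the strategy as written does not go through.
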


In the previous section, we gave an example, $T^{\not\exists}$, of an $\mathrm{NSOP}_{1}$ theory where, for $p \in S(\emptyset)$, $p \vdash \varphi_{1}(x, b) \vee \varphi_{2}(x, b)$, where $\varphi_{1}(x, b)$ $2$-divides over $\emptyset$ and $\varphi_{2}(x, b)$ $3$-divides over $\emptyset$. Here, we describe an example, $T^{\not\exists^{2,2,2}}$, of an $\mathrm{NSOP}_{1}$ theory where, for $p \in S(\emptyset)$, $p \vdash \varphi_{1}(x, b) \vee \varphi_{2}(x, b) \vee \varphi_{3}(x, b)$, where for $i = 1, 2, 3$ each $\varphi_{i}(x, b)$ $2$-divides over $\emptyset$. This will show the optimality of Fact \ref{existencensop}.

Let $\mathcal{L}$ be the language with sorts $P$ and $O$, symbols $R_{1}$, $R_{2}$ and $R_{3}$ for binary relations on $O$, and symbols $\rho_{1}$, $\rho_{2}$, and $\rho_{3}$ for binary relations between $P$ and $O$. Call an $\mathcal{L}$-structure $A$ \textit{copacetic}$^{2,2,2}$ if:

(C1)$^{2,2,2}$ For $i = 1, 2, 3$, $R_{i}(A)$ is a symmetric, irreflexive relation on $O(A)$, and the three are mutually exclusive: for $a_{1}, a_{2} \in O(A)$, $A \not\models R_{i}(a_{1}, a_{2}) 
 \wedge R_{j}(a_{1}, a_{2})$ for $i \neq j \in \{1, 2, 3\}$

 (C2)$^{2,2,2}$ The relation $R_{1}(A) \cup R_{2}(A) \cup R_{3}(A)$ has no loops on $O(A)$ (i.e. there are no distinct $a_{0} \ldots a_{n-1} \in O(A)$, $n > 2$, and $i_{1} \ldots i_{n} \in \{1, 2, 3\}$ so that, for $0 \leq j \leq n-1$, $A \models R_{i_{j}}(a_{i}, a_{i+1 \mathrm{\: mod \:} n})$).

 (C3)$^{2,2,2}$ For all $b \in P(A)$, $a \in O(A)$, exactly one of $A \models \rho_{1}(b, a)$, $A \models \rho_{2}(b, a)$, and $\rho_{3}(b, a)$ hold.

 (C4)$^{2,2,2}$: For $i \in \{1, 2, 3\}$, there is no $b \in P(A)$ and distinct $a_{1}, a_{2}$ on the boundary of some fixed unit $R_{i}$-ball so that $A \models \rho_{i}(b, a_{1}) \wedge \rho_{i}(b, a_{2})$.

 We define the closure relation $\leq$ analogously to the previous section, and construct a theory satisfying the analogous statement to Lemma \ref{embedding}, which will be $\mathrm{NSOP}_{1}$ and satisfy $p \vdash \rho_{1}(x, o) \vee \rho_{2}(x, o) \vee \rho_{3}(x, o)$ for any $o \in O(\mathbb{M})$ and $p \in S(\emptyset)$ the unique type (in one variable) in sort $P$ over $\emptyset$; $\rho_{i}(x, o)$ will $2$-divide over $\emptyset$ for $i \in \{1, 2, 3\}$, as desired. The entire proof is a straightforward generalization of the previous section, with a single exception: in place of Subclaim \ref{pathcoloring}, we must prove the below subclaim. Let $O$ be an undirected graph without cycles and with a $3$-coloring of its edges, with $R_{1}$, $R_{2}$, $R_{3}$ denoting edges of each color. Let $\rho_{1}, \rho_{2}, \rho_{3} \subset O$, $O = \rho_{1} \cup \rho_{2} \cup \rho_{3}$, $\rho_{i} \cap \rho_{j} = \emptyset$ for $i \neq j \in \{1, 2, 3\}$  be a coloring of the vertices of $O$ so that, for $i = \{1, 2, 3\}$, no two distinct vertices of $O$, lying on the boundary of the same $R_{i}$-ball of radius $1$ (i.e. they have a common $R_{i}$-neighbor), are both colored by $\rho_{i}$. Then we call $\rho_{1}, \rho_{2}, \rho_{3}$ a \textit{(C4)$^{2,2,2}$-coloring} of $O$.

 \begin{subclaim}\label{alternatepathcoloring}
     Let $O$ be a connected graph without cycles, and with a $3$-coloring of its edges. Let $O_{1}, O_{2} \subset O$ be connected subgraphs so that each vertex of $O_{1}$ has distance at least $5$ from each vertex of $O_{2}$. For $i = 1, 2$, let $\rho_{1}^{i}, \rho_{2}^{i}, \rho_{3}^{i}$ be a (C4)$^{2,2,2}$-coloring of $O_{i}$. Then there is a (C4)$^{2,2,2}$-coloring $\rho_{1}, \rho_{2}, \rho_{3}$ of some connected set $O'$ containing $O_{1}$ and $O_{2}$, where for $i = 1, 2$, $\rho_{1}, \rho_{2}, \rho_{3}$ extends $\rho_{1}^{i}, \rho_{2}^{i}, \rho_{3}^{i}$ on $O_{i}$.
 \end{subclaim}

 \begin{proof}
     As in the proof of Subclaim \ref{pathcoloring}, let $O'= O_{1} \cup O_{2} \cup I$ where $I$ is the shortest path between $O_{1}$ and $O_{2}$, and let $I$ consist, ordered in the direction from $O_{1}$ to $O_{2}$, of $o_{0}, \ldots o_{n}$, for $o_{0} \in O_{1}$, $o_{n} \in O_{2}$, $o_{1} \ldots o_{n-1} \in O \backslash (O_{1} \cup O_{2}) $, and $n \geq 5$. Again, as in that proof, color $O_{i}$ by $\rho^{i}_{1}, \rho^{i}_{2}, \rho^{i}_{3}$ for $i \in \{1, 2\}$, color $o_{1}$ by $\rho_{i}$ where $i$ is such that $o_{1}$ is not an $R_{i}$-neighbor of $o_{0}$, and color $o_{n-1}$ by $\rho_{j}$ where $j$ is such that $o_{n-1}$ is not an $R_{j}$-neighbor of $o_{n}$--then as before, the condition of being a (C4)$^{2,2,2}$-coloring cannot fail at the boundary of a unit ball centered at a point of $O_{1}$ or $O_{2}$. Now let $n_{\mathrm{even}}, n_{\mathrm{odd}}$, respectively, be the least even and odd numbers less than $n-1$. Then, because there are three colors available, we can color $o_{2}, \ldots, o_{2i}, \ldots, o_{n_{\mathrm{even}}}$ so that each vertex in the sequence $o_{0}, o_{2}, \ldots, o_{2i}, \ldots, o_{n_{\mathrm{even}}}, o_{n_{\mathrm{even}}+2}$ is colored differently from the previous vertex in that sequence--noting that the colors of $o_{0}$ and $o_{n_{\mathrm{even}}+2}$ are already decided, alternate the color of $o_{0}$ with a color distinct from that of $o_{0}$ and $o_{n_{\mathrm{even}}+2}$. Similarly, we can color $o_{3}, \ldots, o_{2i+1} , \ldots o_{n_{\mathrm{odd}}}$ so that each vertex in the sequence $o_{1}, o_{3}, \ldots, o_{2i+1}, \ldots, o_{n_{\mathrm{odd}}}, o_{n_{\mathrm{odd}}+2}$ is colored differently from the previous vertex in that sequence. Coloring the intermediate vertices $o_{2}, \ldots o_{n-2}$ according to these observations, we see that the condition of being a (C4)$^{2,2,2}$-coloring cannot fail on the boundary of a unit ball centered at one of $o_{1}, \ldots o_{n-1}$, because the boundary of such a ball will always be colored by two different colors.
 \end{proof}

Note that a similar subclaim would fail, were we to try to use an analogous construction to obtain an $\mathrm{NSOP}_{1}$ theory where, for $p \in S(\emptyset)$, $p \vdash \varphi_{1}(x, b_{1}) \vee \varphi_{2}(x, b_{2})$ for $\varphi_{i}(x, b_{i})$ $2$-dividing over $\emptyset$.

\section{Open questions}

The theory $T^{\not \exists}$, despite being an $\mathrm{NSOP}_{1}$ theory that does not satisfy the existence axiom, is not countably categorical. Motivated by this, we ask:

\begin{question}
    Does every countably categorical $\mathrm{NSOP}_{1}$ (or even $\mathrm{NSOP}$) theory satisfy the existence axiom?
\end{question}

Moreover, in $T^{\not\exists}$, Kim-independence over models is not just given by the operation $\mathrm{acl}^{eq}$; see Remark \ref{banff}. In Definition 6.10 of \cite{KoponenConjecture}, the definition of the property of being one-based is extended (up to elimination of hyperimaginaries) from simple theories to $\mathrm{NSOP}_{1}$ theories:

\begin{definition}\label{onebased}
     Let $T$ be an $\mathrm{NSOP}_{1}$ theory. Then $T$ is \emph{one-based} if $A \nind^{K}_{M} B$ implies (equivalently, is equivalent to) $\mathrm{acl}^{eq}(AM) \cap \mathrm{acl}^{eq}(BM) \supsetneq M$.
\end{definition}

So $T^{\not\exists}$ is not one-based. (See Example 4.6.1 of \cite{P96}.) This leads us to ask:

\begin{question}
    Does every one-based $\mathrm{NSOP}_{1}$ theory satisfy the existence axiom?
\end{question}

Recall that, as stated in Remark \ref{banff}, Kim-independence is defined over sets in any $\mathrm{NSOP}_{1}$ theory satisfying the existence axiom, but is also defined over sets in $T^{\not\exists}$ despite $T^{\not\exists}$ not satisfying the existence axiom. A final question, motivated by this remark and by the original motivation discussed in the introduction for Question \ref{mainquestion}, the main question of this paper, is asked by Ramsey:

\begin{question}\label{banffquestion}(\cite{Rbanff2023}, \cite{Rbanff2024})
    Is Kim-independence defined over sets in every $\mathrm{NSOP}_{1}$ theory?
\end{question}

\section*{Acknowledgements} The author would like to thank James Freitag, Maryanthe Malliaris and Nicholas Ramsey for many insightful conversations. In particular, conversations with Nicholas Ramsey were instrumental in inspiring the discussion in Remark \ref{banff} and Question \ref{banffquestion} of this paper.

\bibliographystyle{plain}
\bibliography{refs}

\end{document}